\documentclass[reqno,11pt]{amsart}

\usepackage[letterpaper,margin=1in]{geometry}
\usepackage[T1]{fontenc}
\usepackage{amsmath,amssymb,mathtools}
\usepackage[colorlinks=true,linkcolor=blue,citecolor=blue,urlcolor=blue]{hyperref}
\usepackage[capitalize,nameinlink,noabbrev]{cleveref}
\usepackage{xcolor}

\newtheorem{theorem}{Theorem}
\newtheorem{lemma}{Lemma}
\newtheorem{proposition}{Proposition}
\newtheorem{corollary}{Corollary}
\theoremstyle{definition}
\newtheorem{definition}{Definition}
\newtheorem{remark}{Remark}

\newcommand{\CC}{\mathbb{C}}
\newcommand{\RR}{\mathbb{R}}
\newcommand{\T}{\mathbb{T}}
\newcommand{\cQ}{\mathcal{Q}}
\newcommand{\abs}[1]{\left\lvert #1\right\rvert}
\newcommand{\paren}[1]{\left(#1\right)}
\newcommand{\distT}{d_{\T}}

\title{Graphs with connectivity $3/4 - \varepsilon$ are globally synchronizing}

\author{Saba Lepsveridze}
\address{Department of Mathematics, Massachusetts Institute of Technology, Cambridge, MA 02139, USA}
\email{sabal@mit.edu}

\author{Sam Zhang}
\address{Department of Mathematics, Massachusetts Institute of Technology, Cambridge, MA 02139, USA}
\email{shzhang2@mit.edu}

\begin{document}

\begin{abstract}
We study synchronization in the Kuramoto model on finite graphs. We prove that there is an absolute constant $\eta>0$ such that every finite simple graph $G$ on $n$ vertices with minimum degree at least $(3/4-\eta)n$ has no local minima of the Kuramoto energy other than the fully synchronized states. This strictly improves the previous $3/4$ upper bound and refutes a conjecture of Bandeira, Kireeva, Maillard, and R\"odder.
\end{abstract}

\maketitle

\section{Introduction}

The spontaneous synchronization of coupled oscillators was famously observed by Christiaan Huygens in 1665, when two pendulum clocks suspended from a common support settled into a common period, with their pendulums moving in together \cite{bennett-huygens-clocks}. The Kuramoto model, introduced three centuries later, provides a canonical framework for studying how interactions produce collective synchronization \cite{kuramoto-75}. Since then, the Kuramoto model on general graphs and homogeneous networks has been extensively studied. In this general case, identical phase oscillators are placed at the vertices of a graph and coupled along its edges.

Let \(G=(V,E)\) be a finite simple graph on \(n\) vertices, with adjacency matrix \((w_{uv})_{u,v\in V}\). A phase configuration assigns a phase \(\theta_v\in\T:=\RR/2\pi\mathbb Z\) to each vertex. The associated Hamiltonian is
\begin{equation}\label{eq:energy}
    \mathcal E_G(\theta)
    :=\frac12\sum_{u,v\in V}w_{uv}
      \bigl(1-\cos(\theta_u-\theta_v)\bigr).
\end{equation}
For every nonempty connected graph, the global minimizers of \(\mathcal E_G\) are precisely the fully synchronized states, for which $\theta_u=\theta_v$ for all $u,v\in V$. We say that \(G\) is \emph{globally synchronizing} if these are its only local minima.\footnote{Some authors instead call \(G\) globally synchronizing when the negative gradient flow of \(\mathcal E_G\) converges to full synchrony from almost every initial condition. Our result does not immediately apply in that setting, but we strongly believe that it can be extended to apply there. See \cref{rem:final-remark} for more details.}

A central extremal problem asks how large the minimum degree of \(G\) must be to guarantee global synchronization, regardless of the remaining graph structure. We write \(\mu_{\mathrm c}\) for the corresponding critical connectivity. More concretely, define the connectivity\footnote{The $+1$ corresponds to adding a self-loop at every vertex. Such loops do not change the energy but makes the analysis cleaner.} by
\begin{equation*}
    \mu(G) := \frac{\min_{v \in V} \deg v + 1}{|V|}.
\end{equation*}
Then, the critical connectivity is defined by
\begin{equation*}
    \mu_{\mathrm c}
    := \inf \bigl\{\mu\in[0,1] : \text{every finite graph }G
    \text{ with }\mu(G)\geq\mu\text{ is globally synchronizing}\bigr\}.
\end{equation*}

It is not a priori clear that $\mu_{\mathrm c}<1$. Taylor established the first nontrivial upper bound,
\(\mu_{\mathrm c}\leq 0.9395\) \cite{taylor-dense}. Ling, Xu, and Bandeira improved this to \(0.7929\) \cite{ling-xu-bandeira}, and Lu and Steinerberger subsequently obtained \(0.7889\) \cite{lu-steinerberger}. Kassabov, Strogatz, and Townsend reached the substantially stronger bound $ \mu_{\mathrm c}\le 3/4$ in \cite{kassabov-strogatz-townsend-dense}.

At the same time, there are also constructions of graphs with large minimum degree that are not globally synchronizing. Townsend, Stillman, and Strogatz obtained a lower bound of \(0.6828\) from circulant constructions carrying stable nontrivial phase configurations \cite{townsend-stillman-strogatz}. Yoneda, Tatsukawa, and Teramae optimized the circulant twisted-state construction and increased this bound to \(0.6838\) \cite{yoneda-tatsukawa-teramae}. Finally, Canale constructed non-circulant graphs with strict nontrivial local minima and normalized minimum degree tending to \(11/16\) \cite{canale-spinning}. Consequently, before the present work the best rigorous bounds were
\begin{equation*}
    \frac{11}{16}\le \mu_{\mathrm c}\le\frac34.
\end{equation*}

The significance of the upper endpoint is not merely numerical. The argument of Kassabov, Strogatz, and Townsend identified the configuration responsible for the apparent barrier at \(3/4\). They constructed a sequence of graphs with connectivity approaching $3/4$ for which $\mathcal E_G(\theta)$ has nonsynchronized second-order critical points, although these points are unstable. Thus, \(3/4\) is the limit of their purely linear stability analysis, but their construction does not provide a genuine lower bound at that density.

This degeneracy motivated the conjecture that the upper bound \(3/4\) is sharp \cite[Conjecture~5]{randomstrasse-25}. Our main result refutes this conjecture and gives the first strict improvement over the $3/4$ upper bound.  

\begin{theorem}\label{thm:main}
There is an absolute constant \(\eta>0\) such that every finite simple graph \(G\) on \(n\) vertices satisfying
\[
    \min_{v \in V} \deg(v)\ge\paren{\frac34-\eta}n
\]
is globally synchronizing.
\end{theorem}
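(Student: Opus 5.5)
We argue by contradiction. Suppose \(\theta\) is a local minimum of \(\mathcal E_G\) that is not fully synchronized, and \(\mu(G)\ge 3/4-\eta\). Local minimality gives two pieces of information.

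The first-order condition at each vertex \(u\) is
\[
  \sum_v w_{uv}\sin(\theta_v-\theta_u)=0 .
\]
Equivalently, after adding the self-loops, the local order parameter \(z_u:=\sum_v w_{uv}e^{i\theta_v}\) is a real multiple of \(e^{i\theta_u}\).

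The second-order condition says the Hessian is positive semidefinite:
\[
  \sum_{u,v} w_{uv}\cos(\theta_u-\theta_v)(x_u-x_v)^2\ \ge 0
  \qquad\text{for all } x\in\RR^V .
\]
From the first-order condition we also get stability at each vertex, \(\operatorname{Re}(e^{-i\theta_u}z_u)\ge 0\). The argument of Kassabov, Strogatz, and Townsend combines these facts to prove \(\mu(G)\ge 3/4\) implies synchrony. Our first step is to redo their argument quantitatively. The goal is a stability statement: if \(\mu(G)\ge 3/4-\eta\) and \(\theta\) is a second-order critical point that is not synchronized, then \(\theta\) must be \(\delta(\eta)\)-close, in a suitable averaged sense, to the extremal configurations they identified. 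Here \(\delta(\eta)\to0\) as \(\eta\to0\).

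Concretely, we track every inequality in their proof: the bound on \(|z|\) where \(z=\sum_u e^{i\theta_u}\), the bounds on the \(|z_u|\), and the test vectors \(x\) used in the Hessian. At each one we record the slack. Near-equality everywhere should force the following structure. Up to rotation, the phases cluster near a small number of angles, with prescribed proportions of vertices at each. Most edges are present inside each cluster and between the designated pairs of clusters. Most vertices have \(|z_u|\) close to the extremal value. A counting/averaging argument then upgrades "small total slack" to "all but \(O(\sqrt\delta\, n)\) vertices behave like the extremal model".

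The second step uses genuinely nonlinear information, which is necessary because the extremal configurations are second-order critical but unstable. A local minimum must also satisfy higher-order conditions. Wherever the Hessian has a kernel direction \(x\), the third-order term must vanish and the fourth-order term must be nonnegative along \(x\), with corrections coming from the cross term with the Hessian. Alternatively, the energy must not decrease along an explicit finite perturbation, such as rotating a whole cluster. In the near-extremal structure, we take the test perturbation that witnesses instability in the exact extremal model, for instance moving one cluster by a small angle \(t\). We then compute \(\mathcal E_G(\theta+tx)-\mathcal E_G(\theta)\) as
\[
  -c\,t^4 \;+\; O\big((\delta+\eta)t^2\big) \;+\; O(t^6)\,n^2 .
\]
The \(-ct^4\) term is the decrease present in the extremal model. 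The \(O((\delta+\eta)t^2)\) term is the error from the defective vertices and edges. Choosing \(t\approx(\delta+\eta)^{1/4}\) small but not too small makes the energy strictly decrease, contradicting local minimality once \(\eta\) is a small absolute constant.

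The main obstacle is the stability step. The Kassabov–Strogatz–Townsend argument probably has several cases and several tight inequalities. We must make sure that near-tightness in all of them actually pins down a rigid structure. In particular, we must rule out degenerate families, such as clusters that are spread out rather than concentrated, or proportions that drift. For each of these the perturbation must be chosen adaptively. If a single explicit perturbation fails, the fallback is a Hessian-based approach. There we exploit that in the near-extremal structure the Hessian's smallest eigenvalue is within \(O(\delta+\eta)\) of zero while the fourth-order form on the near-kernel is uniformly negative. A compactness or graphon-limit argument on the limiting configurations would then give a non-explicit but absolute \(\eta\).
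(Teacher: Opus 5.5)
Your second step cannot produce the contradiction you want. Since $\theta$ is a local minimum, the $t^2$-coefficient in your expansion of $\mathcal E_G(\theta+tx)-\mathcal E_G(\theta)$ is $\tfrac12 x^{\top}\nabla^2\mathcal E_G(\theta)x\ge 0$. Your plan shows only that this coefficient is $O(\delta+\eta)$, not that it vanishes. If it is strictly positive, the energy increases along $x$ for all small $|t|$. A decrease at $t\approx(\delta+\eta)^{1/4}$ is then at a fixed positive distance, because $\delta$ and $\eta$ are fixed for the given graph. Such a decrease is perfectly consistent with $\theta$ being a strict local minimum with a small basin; compare $\epsilon t^2-t^3$, which has a strict local minimum at $0$ for every $\epsilon>0$.

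Your fallback has the same defect. If the Hessian is positive definite transverse to global rotations, however small its eigenvalues, $\theta$ is a local minimum regardless of the quartic form. A compactness or graphon limit only recovers the exact extremal configuration. That configuration is itself second-order critical, so its higher-order instability does not pass back to the finite approximants. Treating the defective vertices and the intra-cluster spread as unsigned errors is therefore fatal: you must show they make the quadratic form \emph{negative} along a suitable direction.

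That is the missing idea, and it is the heart of the paper: a second-order rigidity statement. In the near-extremal regime, every configuration with $\nabla^2\mathcal E_G\succeq 0$ is \emph{exactly} extremal. The paper makes this regime precise as four clusters of size $\approx n/4$ near the quarter turns with $o(n)$ exceptional vertices, obtained from $\rho_1,\rho_2\to 0$ and near-equality in the complement-sum bound.

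The paper then tests the Hessian on $F(v)=i^j(1+it_v)$ for $v\in B_j$. The first-order condition cancels the apparently linear adjacent-cluster terms, giving $\cQ_{B\times B}(F)\le(-c+C\rho+C\eta)n^2\sigma^2+Cn^2\alpha\beta$. Next, each exceptional vertex gets a unit value $z_u$ contributing at most $-\kappa n/2$. This uses the balance condition at $u$ and the inequality $\Xi(\phi)/(\Xi(\phi)+\cos\phi+\sin\phi)<1/\sqrt2$, which applies because the bulk weight adjacent to $u$ is about $3/4>1/\sqrt2$. Together these force $\sigma=0$ and $E=\varnothing$.

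Only after this reduction is a higher-order perturbation useful. In the exact configuration the energy change is $2m\sin t(1-\cos t)$, a cubic term in $t$ rather than $-ct^4$.
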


Our refined analysis shows that the configurations identified by Kassabov, Strogatz, and Townsend are essentially the only obstruction near \(3/4\). We then rule out this obstruction, yielding the strict improvement. Although an explicit value for  $\eta$ can be derived from our proof, we do not pursue it here.

\subsection{Proof Outline}

To explain the idea of the proof, suppose for contradiction that there is a sequence of growing graphs $G= G_n$ with connectivity $\mu(G)=3/4-o(1)$, together with nontrivial local minima $\theta = \theta^{(n)}$ of the Hamiltonian $\mathcal E_{G}$. The proof proceeds in two steps.

\begin{enumerate}
    \item The first step is to establish the following robustness result: after a global phase rotation, the phases $(\theta_v)_{v \in V}$ cluster into four groups lying within $o(1)$ of the phases $k\pi/2$, for $k\in\{0,1,2,3\}$, apart from an exceptional set of  $o(n)$ vertices; each of these clusters contain $(1/4+o(1))n$ vertices. Moreover, there are only $o(n^2)$ edges present between opposite clusters and only $o(n^2)$ edges missing within clusters or between adjacent clusters. 

    \item Next, we show that no such configuration can satisfy the
    second-order stability condition, except in a very special case with
    no exceptional vertices and no phase deviations within the four clusters. We prove this by testing the Hessian of the energy against a carefully chosen function that treats the vertices in each cluster in a unified manner while assigning a separate test value to each  exceptional vertex. This rules out every configuration except the exact four cluster case. The remaining configuration satisfies the  second-order condition, but an explicit perturbation changes the energy at cubic order and shows that it is not a local minimum.
\end{enumerate}

In this way, we show that every second-order local minimum is fully
synchronized, apart from the exact four cluster configuration, which is
precisely the barrier identified in
\cite[Section~6]{kassabov-strogatz-townsend-dense}. The cubic variation
then rules out this remaining configuration as a local minimum.

We emphasize that this clustering phenomenon was already suggested
heuristically in \cite[Section~6]{kassabov-strogatz-townsend-dense}. In
\cref{sec:order-parameters}, we give a self-contained, rigorous version of
the technical estimates underlying their $3/4$ upper-bound
argument. We then use these estimates to prove the clustering statement by a combinatorial
argument in \cref{sec:four-clusters}. Next, we construct  the special test function on the bulk in \cref{sec:bulk}, and \cref{sec:exceptional} extends it to the exceptional vertices. Finally, \cref{sec:exact-four-clusters}
rules out the exact four cluster case.

\subsection{Related Work}

Alongside the minimum degree problem discussed above, a parallel
line of work studies random networks and expander graphs. Ling, Xu, and
Bandeira \cite{ling-xu-bandeira} first proved that $G(n,p)$ is globally synchronizing with high
probability for $p=Cn^{-1/3}\log n$ with a sufficiently large constant
$C$. Kassabov, Strogatz, and Townsend \cite{kassabov-strogatz-townsend-random} subsequently
improved the sufficient range to $p\gg (\log n)^2/n$. Finally, Abdalla, Bandeira, Kassabov,
Souza, Strogatz, and Townsend \cite{bandeira-expanders} reached the optimal connectivity threshold\footnote{This is the connectivity threshold $\log n / n$ for Erd\H{o}s--R\'{e}nyi random graphs. It should not be confused with critical connectivity threshold $\mu_c$ defined above.}: for every fixed $\varepsilon>0$, they proved that $G(n,p)$ is globally
synchronizing with high probability whenever
$p\geq (1+\varepsilon)\log n/n$. In fact, Jain, Mizgerd, and Sawhney \cite{jain-mizgerd-sawhney} sharpened this further, proving that when edges get added randomly and uniformly to an initially independent set, the graph becomes globally synchronizing as soon it becomes connected. Thus connectivity is asymptotically the only obstruction in this model.

Note that \cite{bandeira-expanders} also present an expansion criteria. They show that if $G$ is $(n, d, \alpha d)$-expander graph, then it is globally synchronizing for $\alpha\leq 0.0816$. As consequences, they established global
synchronization for $d$-regular Ramanujan graphs and, with high probability, uniformly random $d$-regular graphs when $d\geq 600$. 

McRae \cite{mcrae-benign-landscapes} subsequently gave another deterministic criteria based the condition number of the Laplacian matrix. More precisely, for a connected graph to be  global synchronizing, it suffices that either the ordinary Laplacian $L$ or the normalized Laplacian $\mathcal{L}$ satisfy
\[
    \frac{\lambda_n(L)}{\lambda_2(L)}<2
    \qquad\text{or}\qquad
    \frac{\lambda_n(\mathcal{L})}{\lambda_2(\mathcal{L})}<2.
\]
For regular expanders, this improves the
allowable range to $\alpha<1/3$ and lowers the corresponding degree bound for both Ramanujan graphs and random $d$-regular graphs from $600$ to $35$. The same criterion gives short alternative proofs of the $3/4$ minimum degree theorem and of the random graph process result, as well as a new optimal result for a randomly signed Erd\H{o}s--R\'enyi model. 

Recent work has also uncovered structural mechanisms for synchronization
that do not rely on density or expansion. Wu and Brandes proved that all
connected threshold graphs are second-order globally synchronizing
\cite{wu-brandes-threshold}; they subsequently extended this conclusion
to all connected quasi-threshold graphs by showing how local
synchronization propagates along an underlying graph skeleton
\cite{wu-brandes-skeleton}. These results further illustrate that, outside
the worst case minimum degree setting considered here, the detailed graph structure can be more decisive than edge density alone.

\section*{Acknowledgments}

This research was completed at the SPUR program at Massachusetts Institute of Technology. We are grateful to David Jerison, Jonathan Bloom, and Oriol Sol\'e Pi for their insightful discussions and feedback, as well as Nike Sun for suggesting the problem. ChatGPT-5.6 Sol was used in editing and proofreading, as well as refining technical proofs. The proof strategy and any errors are entirely our own.

\section{Order parameter estimates}\label{sec:order-parameters}
In this section, following the argument of
\cite{kassabov-strogatz-townsend-dense}, we derive the two estimates that
drive the clustering analysis in \cref{sec:four-clusters}.  The main deliverables are 
\cref{lem:global-complement-c-sum-ub} and \cref{lem:rho1-rho2-ub-main}.

Note that every local minimum of $\mathcal{E}_G$ is a second-order critical point:
$\nabla\mathcal E_G(\theta)=0$ and
$\nabla^2\mathcal E_G(\theta)\succeq0$.  The first condition is equivalent
to the equilibrium equations
\begin{equation}\label{eq:first-order-condition-statement}
    \sum_{v\in V}w_{uv}\sin(\theta_u-\theta_v)=0
    \qquad (u\in V).
\end{equation}
The second is equivalent to
\begin{equation}\label{eq:second-order-condition-statement}
    \sum_{u,v\in V}w_{uv}\cos(\theta_u-\theta_v)
        \abs{F(u)-F(v)}^2\ge 0,
\end{equation}
for every $F\colon V\to\CC$. For a configuration $\theta$, define its first two order parameters by
\begin{equation}\label{eq:order-parameters}
    \rho_1:=\frac1n\sum_{v\in V}e^{i\theta_v},
    \qquad
    \rho_2:=\frac1n\sum_{v\in V}e^{2i\theta_v}.
\end{equation}
Thus $\rho_1$ is the average of the phase vectors.  If
$\abs{\rho_1}$ is close to $1$, then most phases are nearly aligned;
if it is small, the phase vectors exhibit substantial cancellation.
The second parameter $\rho_2$ measures the degree of antipodal coherence in the configuration.

In the rest of the paper, we call a local minimum $\theta$ \emph{nontrivial} if its phases are not
all equal. For notational brevity, set
\[
    C(x):=\cos x-\cos^2x.
\]
The following result appeared in \cite[Equation~6]{kassabov-strogatz-townsend-dense}; we give a self-contained proof for completeness.
\begin{lemma}\label{lem:global-complement-c-sum-ub}
If $(G,\theta)$ is a local minimum, then
\[
    \sum_{u,v\in V}(1-w_{uv})C(\theta_u-\theta_v)
    \le \frac{n^2}{2}
       \bigl(-1+2\abs{\rho_1}^2-\abs{\rho_2}^2\bigr).
\]
\end{lemma}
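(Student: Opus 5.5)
The plan is to test the second-order condition \eqref{eq:second-order-condition-statement} against the single choice $F(v)=e^{i\theta_v}$. This should show that $\sum_{u,v}w_{uv}C(\theta_u-\theta_v)\ge 0$. Then we compute the sum of $C(\theta_u-\theta_v)$ over \emph{all} ordered pairs exactly in terms of the order parameters, and subtract. The first-order condition \eqref{eq:first-order-condition-statement} should not be needed at all.

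\textbf{Step 1 (Hessian test).} With $F(v)=e^{i\theta_v}$ we have
\[
\abs{F(u)-F(v)}^2=\abs{e^{i\theta_u}-e^{i\theta_v}}^2=2-2\cos(\theta_u-\theta_v).
\]
Substituting into \eqref{eq:second-order-condition-statement} gives
\[
0\le \sum_{u,v}w_{uv}\cos(\theta_u-\theta_v)\bigl(2-2\cos(\theta_u-\theta_v)\bigr)=2\sum_{u,v}w_{uv}C(\theta_u-\theta_v).
\]

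\textbf{Step 2 (complete-graph sum).} Expand $\cos(\theta_u-\theta_v)=\operatorname{Re} e^{i\theta_u}e^{-i\theta_v}$. Summing over all $u,v\in V$ gives
\[
\sum_{u,v}\cos(\theta_u-\theta_v)=\Bigl\lvert\sum_v e^{i\theta_v}\Bigr\rvert^2=n^2\abs{\rho_1}^2.
\]
Likewise, using $\cos^2x=\tfrac12(1+\cos 2x)$,
\[
\sum_{u,v}\cos^2(\theta_u-\theta_v)=\frac{n^2}{2}+\frac{n^2}{2}\abs{\rho_2}^2.
\]
Therefore
\[
\sum_{u,v}C(\theta_u-\theta_v)=\frac{n^2}{2}\bigl(-1+2\abs{\rho_1}^2-\abs{\rho_2}^2\bigr).
\]

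\textbf{Step 3 (subtract).} Write $\sum_{u,v}(1-w_{uv})C=\sum_{u,v}C-\sum_{u,v}w_{uv}C$. By Step 1 the second sum is nonnegative, so the left side is at most the complete-graph sum from Step 2, which is the claimed bound.

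The diagonal terms $u=v$ cause no issue. We have $w_{uu}=0$ for a simple graph, but $C(0)=1-1=0$, so these terms contribute nothing to any of the sums above.

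There is no real obstacle here. The only point requiring care is choosing the test function $F=e^{i\theta}$ so that the Hessian weight $\cos$ pairs with $1-\cos$ to produce exactly $C$.
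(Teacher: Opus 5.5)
Your proposal is correct and matches the paper's proof essentially step for step. Test the Hessian with $F(v)=e^{i\theta_v}$ to get $\sum_{u,v}w_{uv}C(\theta_u-\theta_v)\ge 0$, compute the unweighted sum $\sum_{u,v}C(\theta_u-\theta_v)=\frac{n^2}{2}\bigl(-1+2\abs{\rho_1}^2-\abs{\rho_2}^2\bigr)$, and subtract. (A minor wording point: the diagonal terms do contribute $n$ each to $\sum\cos$ and $\sum\cos^2$ separately, but your Step 2 identities already run over all ordered pairs and $C(0)=0$, so nothing changes.)
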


\begin{proof}
Apply \eqref{eq:second-order-condition-statement} with $F(v)=e^{i\theta_v}$. Since $ \abs{F(u) - F(v)}^2
    =2\bigl(1-\cos(\theta_u-\theta_v)\bigr),$ we obtain
\begin{equation}\label{eq:global-cos-cos2-stability}
    \sum_{u,v\in V}w_{uv}C(\theta_u-\theta_v)\ge0.
\end{equation}
On the other hand, since $\cos(\theta_u - \theta_v) = \Re \{F(u)\overline F(v)\},$ we get
\[
\sum_{u,v\in V}\cos(\theta_u-\theta_v)
    =\Bigl|\sum_{u \in V} F(u)\Bigr|^2
    =n^2\abs{\rho_1}^2,
\]
and similarly since $\cos(2\theta_u - 2\theta_v) = 2\cos^2(\theta_u - \theta_v) - 1$, we get
\[
\sum_{u,v\in V}\cos^2(\theta_u-\theta_v)
    =\frac{n^2}{2}
      +\frac12\Bigl|\sum_{v\in V}F(v)^2\Bigr|^2
    =\frac{n^2}{2}\bigl(1+\abs{\rho_2}^2\bigr).
\]
Consequently, 
\begin{equation}\label{eq:global-cos-cos2-sum}
\sum_{u,v\in V}C(\theta_u-\theta_v)
  =\frac{n^2}{2}
    \bigl(-1+2\abs{\rho_1}^2-\abs{\rho_2}^2\bigr).
\end{equation}
Subtracting \eqref{eq:global-cos-cos2-stability} from
\eqref{eq:global-cos-cos2-sum} proves the claim.
\end{proof}

The preceding estimate isolates the contribution of the complement of
$G$.  We next use the minimum degree hypothesis and the equilibrium equations to obtain a lower bound for the same complement sum.  Comparing the two bounds yields a quantitative constraint on $\rho_1$ and $\rho_2$. The proof strategy is a modification of \cite[Lemma~1]{kassabov-strogatz-townsend-dense}.

\begin{proposition}\label{prop:main-rho1-rho2-bound}
Suppose that $\theta$ is a local minimum of $\mathcal E_G$ and that
$\mu(G)\ge\mu$, where $\mu\in[0,1]$.  Then
\[
1-2\abs{\rho_1}^2+\abs{\rho_2}^2
\le \sqrt{16(1-\mu)^2
       -8\abs{\rho_1}^2
        \bigl(1-\abs{\rho_2}\bigr)}.
\]
\end{proposition}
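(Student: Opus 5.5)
The plan is to sandwich the complement sum $\Sigma:=\sum_{u,v}(1-w_{uv})C(\theta_u-\theta_v)$. \cref{lem:global-complement-c-sum-ub} already gives $-\Sigma\ge \frac{n^2}{2}X$, where $X:=1-2\abs{\rho_1}^2+\abs{\rho_2}^2$. It therefore suffices to prove the matching estimate
\[
-\Sigma\le \frac{n^2}{2}\sqrt{16\delta^2-8\abs{\rho_1}^2(1-\abs{\rho_2})},\qquad \delta:=1-\mu,
\]
using only the minimum degree hypothesis and the equilibrium equations \eqref{eq:first-order-condition-statement}. Throughout, the ``complement'' is taken with self-loops included, so $\bar d_u:=\sum_v(1-w_{uv})\le \delta n$ for every $u$. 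Consequently the number of ordered non-adjacent pairs is $N\le \delta n^2$.

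\textbf{Step 1: equilibrium forces mass on the complement.} For each $u$, \eqref{eq:first-order-condition-statement} says that $\sum_v w_{uv}e^{i(\theta_v-\theta_u)}$ is real. Subtracting this from $\sum_v e^{i(\theta_v-\theta_u)}=n\rho_1e^{-i\theta_u}$ gives
\[
n\,\Im\bigl(\rho_1e^{-i\theta_u}\bigr)=\sum_v(1-w_{uv})\sin(\theta_v-\theta_u).
\]
By Cauchy--Schwarz, $n^2\,\Im(\rho_1e^{-i\theta_u})^2\le \delta n\sum_v(1-w_{uv})\sin^2(\theta_u-\theta_v)$. Next I sum over $u$. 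Writing $\rho_1=\abs{\rho_1}e^{i\varphi}$, we have
\[
\sum_u\sin^2(\varphi-\theta_u)=\tfrac n2\bigl(1-\Re(e^{2i\varphi}\overline{\rho_2})\bigr)\ge \tfrac n2(1-\abs{\rho_2}).
\]
Combining the two displays yields
\[
S:=\sum_{u,v}(1-w_{uv})\sin^2(\theta_u-\theta_v)\ge \frac{n^2\abs{\rho_1}^2(1-\abs{\rho_2})}{2\delta}.
\]
This is exactly where the combination $\abs{\rho_1}^2(1-\abs{\rho_2})$ in the statement comes from.

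\textbf{Step 2: a pointwise inequality and optimization.} With $c=\cos x$ we have $-C(x)=c^2-c$, and $c^2-c\le 2$ on $[-1,1]$ with equality only at $c=-1$, where $\sin x=0$. So a pair with large $\sin^2$ cannot contribute the maximal value $2$. I would aim for a bound of the shape
\[
-\Sigma\le \Phi(N,S)
\]
by exploiting the fact that $(c^2-c,\,1-c^2)$ ranges over an explicit curve. The cleanest route I would try first is Cauchy--Schwarz on $-C=c(c-1)$, in the form
\[
\Bigl(\sum(1-w)(c^2-c)\Bigr)^2\le \Bigl(\sum(1-w)(1-c)^2\Bigr)\Bigl(\sum(1-w)c^2\Bigr).
\]
Both factors are then written through $N$ and $S$. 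For the second factor, $\sum(1-w)c^2=N-S$. For the first, $(1-c)^2\le 2(1-c)\le \dots$ should be bounded by a linear expression in $1$ and $\sin^2$. This should produce something of the form $\sqrt{\alpha N^2-\beta NS}$. Plugging in $N\le\delta n^2$ and the lower bound on $S$ from Step 1, I expect to arrive at $\frac{n^2}{2}\sqrt{16\delta^2-8\abs{\rho_1}^2(1-\abs{\rho_2})}$. The constants $16=4\cdot 2^2$ and $8=2\cdot 4$ are consistent with this: $(n^2/2)^2\cdot 16\delta^2=(2\delta n^2)^2$ matches the trivial bound $2N$, and the correction $-\beta NS$ has the right scaling $\delta\cdot n^2\cdot n^2\abs{\rho_1}^2(1-\abs{\rho_2})/\delta$.

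\textbf{Main obstacle.} The delicate point is Step 2: choosing the pointwise/Cauchy--Schwarz inequality so that the constants come out exactly as $16$ and $8$. A naive linear bound $-C\le 2-t\sin^2$ fails; for instance $t=2$ would need $c^2+c\ge0$, which is false on $(-1,0)$. Hence some quadratic (square-root) coupling between $N$ and $S$ is genuinely needed. If the product form above does not give the exact constants, I would instead keep the per-vertex structure. That is, I would bound $\sum_v(1-w_{uv})(c^2-c)$ for each $u$ using $\bar d_u\le\delta n$ together with the per-vertex value $\Im(\rho_1e^{-i\theta_u})^2$. Then I would apply Cauchy--Schwarz or concavity of $\sqrt{\cdot}$ over $u$ at the end, since Jensen for the square root naturally produces the $\sqrt{16\delta^2-8\abs{\rho_1}^2(1-\abs{\rho_2})}$ form after averaging.
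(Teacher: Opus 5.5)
Your approach works, but Step~2 needs two small fixes, and it takes a somewhat different route from the paper.

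\textbf{Closing Step~2.} Your Step~1 is the paper's first Cauchy--Schwarz step, summed over $u$ immediately; the paper rotates so that $\rho_1\ge0$, which is what your $\varphi$ accomplishes. Step~2 is left open, but the product form you wrote already closes it with the crudest bound $\abs{1-\cos x}\le2$. With $c=\cos(\theta_u-\theta_v)$ and $1-w_{uv}\in\{0,1\}$,
\[
-\Sigma\le\sum_{u,v}(1-w_{uv})\abs{c}\,\abs{1-c}\le\Bigl(\sum_{u,v}(1-w_{uv})c^2\Bigr)^{1/2}\bigl(4N\bigr)^{1/2}=2\sqrt{N(N-S)}.
\]
This is the case $\alpha=\beta=4$ that your constant check calls for. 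Avoid the refinement $(1-c)^2\le2(1-c)$: it produces $\sum(1-w_{uv})\cos(\theta_u-\theta_v)$, which is not a function of $N$ and $S$.

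\textbf{Justifying the substitution.} ``Plugging in $N\le\delta n^2$'' needs one line, because the term $-NS$ has the wrong sign for simply inserting an upper bound on $N$. Put $D:=\delta n^2$; since $0\le S\le N\le D$,
\[
D(D-S)-N(N-S)=(D-N)(D+N-S)\ge0.
\]
Now use Step~1 in undivided form, $DS\ge\frac{n^4}{2}\abs{\rho_1}^2(1-\abs{\rho_2})$. This gives $2\sqrt{N(N-S)}\le\frac{n^2}{2}\sqrt{16\delta^2-8\abs{\rho_1}^2(1-\abs{\rho_2})}$. Keeping Step~1 undivided also covers $\delta=0$, and it shows the radicand is nonnegative, since it dominates $4N(N-S)\ge0$. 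With these two fixes your argument is a complete proof.

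\textbf{Comparison with the paper.} The paper works one vertex at a time. It uses the pointwise inequality $\cos x-\cos^2x\ge-2\abs{\cos x}$, which is the same fact $\abs{1-c}\le2$. A second per-vertex Cauchy--Schwarz then gives $\sum_v(1-w_{uv})\abs{\cos(\theta_u-\theta_v)}\le n\sqrt{(1-\mu)^2-\rho_1^2\sin^2\theta_u}$. Finally, Jensen for the concave square root handles the sum over $u$.

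You instead do one global Cauchy--Schwarz over all non-adjacent ordered pairs, and replace Jensen by the elementary extremization above. The ingredients are the same, and both routes give exactly the constants $16$ and $8$. With $S_u:=\sum_v(1-w_{uv})\sin^2(\theta_u-\theta_v)$, the paper's intermediate bound $2\sum_u\sqrt{\bar d_u(\bar d_u-S_u)}$ is slightly sharper than your $2\sqrt{N(N-S)}$. That gain disappears once $\bar d_u\le\delta n$ is imposed, so both collapse to the same final inequality. Your fallback in the last paragraph is essentially the paper's route; the ingredient it was missing is exactly that pointwise inequality.
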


\begin{proof}
Rotate all phases by a common angle so that
$\rho_1=\abs{\rho_1}\ge0$ is real.  Adding a self-loop at every vertex changes neither
the energy nor the first and second order conditions, so throughout the
proof we take $w_{uu}=1$.  Using
\eqref{eq:first-order-condition-statement} and the fact that $\rho_1$
is real, we have
\begin{align*}
\sum_{v\in V}(1- w_{uv})\sin(\theta_u-\theta_v)=\sum_{v\in V}\sin(\theta_u-\theta_v) =n \rho_1 \sin\theta_u.
\end{align*}
Applying Cauchy--Schwarz, using that $w_{uv} \in \{0, 1\}$, gives
\begin{equation*}
    n^2\rho_1^2\sin^2\theta_u\le \Bigl[\sum_{v\in V}(1- w_{uv})\Bigr]\Bigl[\sum_{v\in V} (1- w_{uv}) \sin^2(\theta_u-\theta_v)\Bigr].
\end{equation*}
Another Cauchy--Schwarz now gives 
\begin{align*}
\Bigl[\sum_v(1- w_{uv})
       \abs{\cos(\theta_u-\theta_v)}\Bigr]^2
&\le \Bigl[\sum_{v\in V}(1- w_{uv})\Bigr]\Bigl[\sum_{v\in V} (1-w_{uv}) \cos^2(\theta_u-\theta_v)\Bigr]\\
&=\Bigl[\sum_{v\in V}(1- w_{uv})\Bigr]^2-\Bigl[\sum_{v\in V}(1- w_{uv})\Bigr]\Bigl[\sum_v (1- w_{uv})
       \sin^2(\theta_u-\theta_v)\Bigr]\\
&\le n^2\bigl[(1-\mu)^2-\rho_1^2\sin^2\theta_u\bigr].
\end{align*}
In particular, every expression under a square root below is
nonnegative.  Since $t-t^2\ge-2\abs{t}$ for any $t\in[-1,1]$, it follows that
\[
\sum_v (1-w_{uv}) C(\theta_u-\theta_v)
\ge-2n\sqrt{(1-\mu)^2-\rho_1^2\sin^2\theta_u}.
\]
After summing over $u$, Jensen's inequality yields
\begin{equation}\label{eq:complement-c-lower-bound-pre-rho2}
\sum_{u,v}(1- w_{uv})C(\theta_u-\theta_v)
\ge-2n^2\sqrt{(1-\mu)^2
       -\frac{\rho_1^2}{n}\sum_u\sin^2\theta_u}.
\end{equation}
Furthermore, note that 
\[
    \frac1n\sum_u\sin^2\theta_u
    =\frac12\bigl(1-\Re\rho_2\bigr)
    \ge\frac12(1-|\rho_2|).
\]
Thus \eqref{eq:complement-c-lower-bound-pre-rho2} implies
\[
\sum_{u,v}(1-w_{uv})C(\theta_u-\theta_v)
\ge-2n^2\sqrt{(1-\mu)^2-\frac{\rho_1^2}{2}(1-|\rho_2|)}.
\]
On the other hand, \cref{lem:global-complement-c-sum-ub} gives
\[
\sum_{u,v}(1- w_{uv})C(\theta_u-\theta_v)
\le-\frac{n^2}{2}\bigl(1-2\rho_1^2+|\rho_2|^2\bigr).
\]
Comparing the last two inequalities proves the claim.
\end{proof}

\begin{lemma}[Half-circle lemma \cite{bandeira-expanders}]
\label{lem:half-circle-lemma}
Suppose $(G,\theta)$ is an equilibrium of a connected graph and, after a
global rotation, $\cos\theta_v>0$ for every $v\in V$.  Then $\theta$ is
fully synchronized.
\end{lemma}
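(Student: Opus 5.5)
We prove the half-circle lemma by an extremal-vertex argument; only the first-order equations \eqref{eq:first-order-condition-statement} and connectivity are needed. After the rotation, every phase has a representative $\theta_v\in(-\pi/2,\pi/2)$, and we work with these real representatives throughout. On this interval all pairwise differences satisfy $\abs{\theta_u-\theta_v}<\pi$. Hence $\sin(\theta_u-\theta_v)$ has the same sign as the real number $\theta_u-\theta_v$, and it vanishes only when $\theta_u=\theta_v$.

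Let $M:=\max_v\theta_v$ and $S:=\{v:\theta_v=M\}$. Suppose for contradiction that $S\neq V$. Since $G$ is connected, some edge joins a vertex $u\in S$ to a vertex $v\notin S$. Consider the equilibrium equation at $u$. Every term $w_{uv}\sin(\theta_u-\theta_v)$ is nonnegative, because $\theta_u=M\ge\theta_v$ and the difference lies in $[0,\pi)$. The term coming from the edge to the vertex outside $S$ is strictly positive. So the sum is strictly positive, contradicting \eqref{eq:first-order-condition-statement}. Therefore $S=V$, all phases are equal, and $\theta$ is fully synchronized.

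The one point requiring care is the passage from the circle to real representatives: the sign argument needs the differences to lie strictly inside $(-\pi,\pi)$. This is exactly what the hypothesis $\cos\theta_v>0$ guarantees, since it places all phases in an open half-circle. A closed half-circle would allow antipodal pairs, whose $\sin$ terms vanish and break the strict inequality. I expect no other obstacle; in particular, the minimum degree hypothesis and the self-loops play no role here.
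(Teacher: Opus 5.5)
Your proof is correct and is essentially the paper's argument: choose real representatives in $(-\pi/2,\pi/2)$, note that every summand of the equilibrium equation at a maximal-phase vertex is nonnegative, and use connectivity to conclude that the maximal-phase set is all of $V$. The only difference is cosmetic. You argue by contradiction through a single boundary edge of the maximal set, while the paper shows directly that every neighbor of a maximal vertex is maximal, so the maximal set has no edges to its complement.
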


\begin{proof}
Choose representatives with $-\pi/2<\theta_v<\pi/2$, and let $u$ maximize
$\theta_u$.  Since $0\le\theta_u-\theta_v<\pi$ for every $v$, each
summand in
\[
    \sum_vw_{uv}\sin(\theta_u-\theta_v)=0
\]
is nonnegative, so every neighbor of $u$ also has the maximal phase.
The set of vertices with maximal phase therefore has no edge to its
complement.  Connectedness implies that this set is all of $V$.
\end{proof}


\begin{lemma}\label{lem:rho1-rho2-ub-main}
For every $\varepsilon>0$, there is an $\eta>0$ such that every nontrivial
local minimum $(G,\theta)$ with $\mu(G)\ge3/4-\eta$ satisfies
\[
    \abs{\rho_1}\le\varepsilon,
    \qquad
    \abs{\rho_2}\le\varepsilon.
\]
\end{lemma}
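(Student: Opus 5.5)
The plan is a compactness argument in $(|\rho_1|,|\rho_2|)$. Suppose the statement fails for some $\varepsilon>0$. Then there are nontrivial local minima $(G_k,\theta^{(k)})$ with $\mu(G_k)\ge 3/4-1/k$ and $\max(|\rho_1|,|\rho_2|)>\varepsilon$. Pass to a subsequence along which $(|\rho_1|,|\rho_2|)\to(a,b)$; the goal is to show $a=b=0$, which contradicts $\max(a,b)\ge\varepsilon$. We split according to whether $a$ is large or small. When $a$ is large, pointwise arguments together with \cref{lem:half-circle-lemma} show that no nontrivial minimum can exist. When $a$ is small, the limiting form of \cref{prop:main-rho1-rho2-bound} forces $a=b=0$.

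\emph{Large $|\rho_1|$.} Fix $\mu\ge 3/4-\eta$ and rotate so that $\rho_1=|\rho_1|\ge0$.
\begin{itemize}
\item As in the proof of \cref{prop:main-rho1-rho2-bound}, the equilibrium equations give $n\rho_1\sin\theta_u=\sum_v(1-w_{uv})\sin(\theta_u-\theta_v)$. Hence
\[
|\sin\theta_u|\le \frac{1-\mu}{\rho_1}\approx\frac1{4\rho_1}.
\]
\item Apply \eqref{eq:second-order-condition-statement} with $F=\mathbf 1_{\{u\}}$. This gives $\sum_v w_{uv}\cos(\theta_u-\theta_v)\ge 0$ (the self-loop convention is harmless).
\item Write this sum as $n\rho_1\cos\theta_u-\sum_v(1-w_{uv})\cos(\theta_u-\theta_v)$. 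The complement part is at most $n(1-\mu)$ in absolute value, so $\cos\theta_u\ge-(1-\mu)/\rho_1\approx -1/(4\rho_1)$.
\item Suppose $\rho_1>\sqrt2/4+\delta$ for a small fixed $\delta$ (with $\eta$ small in terms of $\delta$). Then the sine bound forces $\theta_u$ into one of two short arcs, around $0$ and around $\pi$. On the arc around $\pi$ we have $\cos\theta_u\le-\sqrt{1-1/(16\rho_1^2)}+o(1)$. This is strictly less than $-1/(4\rho_1)$ exactly because $\rho_1>\sqrt2/4$.
\item So every vertex has $\cos\theta_u>0$. The graph is connected since its minimum degree exceeds $n/2$, and \cref{lem:half-circle-lemma} then says $\theta$ is synchronized, contradicting nontriviality.
\end{itemize}
Hence in the limit we may assume $a\le\sqrt2/4+\delta$, so $s:=a^2\le 1/8+O(\delta)$.

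\emph{Small $|\rho_1|$.} Let $\eta\to0$ in \cref{prop:main-rho1-rho2-bound}. This gives
\[
1-2s+b^2\le\sqrt{1-8s(1-b)}.
\]
Since $s<1/2$, the left side is positive, so we may square. Expanding, the inequality becomes $Q(s,b)\le0$, where
\[
Q(s,b)=4s^2+4s(1-2b-b^2)+b^4+2b^2 .
\]
It remains to show $Q(s,b)>0$ on $[0,1/8+O(\delta)]\times[0,1]$ away from the origin.
\begin{itemize}
\item At $s=0$ we have $Q=b^4+2b^2$, which vanishes only at $b=0$.
\item $Q$ is convex in $s$, with unconstrained minimizer $s^*=(b^2+2b-1)/2$.
\item If $s^*\le0$, then $Q$ is increasing on $[0,1/8]$. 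So $Q>0$ there unless $s=b=0$.
\item If $s^*>0$, then $b>\sqrt2-1$. In this range the bracket $1-2b-b^2\ge-2$, so $Q\ge 4s^2-8s+b^4+2b^2$. For $s\le 0.15$ this is at least $2(\sqrt2-1)^2-1.2+\dots>0$. If that crude bound turns out too tight, I would instead check the endpoints $s=0$ and $s=1/8+O(\delta)$ directly against the one-variable polynomial in $b$.
\end{itemize}
This gives $a=b=0$, the desired contradiction.

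\emph{Main obstacle.} I expect the hardest part to be this last verification. The region $Q\le0$ really does reach near the synchronized corner $s\approx b\approx1$, so it must be shown to stay disjoint from the strip $s\lesssim1/8$ with a margin. That margin is what lets $\delta$ and $\eta$ be chosen small without the two regimes failing to overlap.
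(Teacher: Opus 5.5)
Your overall plan works, and your treatment of large $\abs{\rho_1}$ is a genuinely different, weaker route from the paper's. However, the final positivity check fails as written.

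\textbf{Comparison with the paper's first step.} The paper keeps the complement sum as a single complex number. With $r_u>0$ from the indicator test, it has $\bigl|\rho_1-r_ue^{i\theta_u}\bigr|=\frac1n\bigl|\sum_v(1-w_{uv})e^{i\theta_v}\bigr|\le1-\mu$. So a single vertex with $\cos\theta_u\le0$ already forces $\rho_1\le1-\mu$, which gives $a\le1/4$. You instead bound the imaginary and real parts separately, each by $1-\mu$. This is correct but loses a factor $\sqrt2$, leaving $a\le\sqrt2/4$, i.e.\ $s\le1/8$ rather than $1/16$. That is still enough. The problem is your verification of $Q>0$ on $[0,1/8]\times[0,1]$ minus the origin, which is the step you yourself identify as the crux.

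\textbf{The gap.} In the case $s^*>0$ (so $b>\sqrt2-1$), replacing $1-2b-b^2$ by $-2$ is far too lossy near $b=\sqrt2-1$, where that bracket is close to $0$. For such $b$ and $s=1/8$, your lower bound $4s^2-8s+b^4+2b^2$ is about $0.37-0.94<0$. Your fallback of checking only $s=0$ and $s=1/8$ does not repair this. $Q$ is convex in $s$, so endpoint values control its maximum on an interval, not its minimum. Moreover, for $b\in(\sqrt2-1,1/2)$ the minimizer $s^*=(b^2+2b-1)/2$ lies strictly inside $(0,1/8)$.

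\textbf{How to repair it.} The claim itself is true. For $b\in[\sqrt2-1,1/2]$, evaluate at the minimizer:
\[
Q(s^*,b)=b^4+2b^2-(1-2b-b^2)^2=4b-4b^3-1 .
\]
This is increasing on that interval, hence at least $23-16\sqrt2>0$. For $b\ge1/2$ one has $s^*\ge1/8$, so the minimum over $s\in[0,1/8]$ is $Q(1/8,b)=\tfrac9{16}-b+\tfrac32b^2+b^4>0$.

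Simpler still is the paper's device $8b\le2+8b^2$, which gives
\[
Q(s,b)\ge4s^2+2s+(2-12s)b^2+b^4\ge4s^2+2s+\tfrac12b^2+b^4\qquad(0\le s\le1/8).
\]
The right-hand side is positive away from the origin. The first inequality also shows $Q>0$ on the whole strip $s<1/6$, which is why your lossy $\sqrt2/4$ bound suffices. The paper's sharper bound $\abs{\rho_1}\le1-\mu$ mainly buys a one-line finish.
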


\begin{proof}
Let $\theta$ be a nontrivial local minimum on a connected graph, and rotate the phases so that $\rho_1=\abs{\rho_1}\ge0$ is real.  As in the proof
of \cref{prop:main-rho1-rho2-bound}, add a self-loop at every vertex, so
$w_{uu}=1$.  For $u\in V$, set
\[
    r_u:=\frac1n\sum_vw_{uv}\cos(\theta_u-\theta_v).
\]
The equilibrium equations give
\[
    \frac1n\sum_vw_{uv}e^{i\theta_v}=r_u e^{i\theta_u}
\]
with $r_u$ a real number. Testing \eqref{eq:second-order-condition-statement} on the indicator of
$u$ shows that the contribution from the non-loop edges incident to $u$
is nonnegative; hence $r_u > 0$.  It follows that
\begin{equation}\label{eq:rho1-local-disk-bound}
    \Bigl|\rho_1-r_u e^{i\theta_u}  \Bigr|
    =\frac1n\Bigl|\sum_v(1-w_{uv})e^{i\theta_v}\Bigr|
    \le1-\mu(G).
\end{equation}
If $\rho_1>1-\mu(G)$, then \eqref{eq:rho1-local-disk-bound} implies $\Re(r_u e^{i\theta_u})>0$ for every $u$.  Since
$r_u>0$, all phases satisfy $\cos\theta_u>0$, and
\cref{lem:half-circle-lemma} makes the configuration fully synchronized, which is a contradiction.  We have therefore proved that every such nontrivial
local minimum satisfies
\begin{equation}\label{eq:rho1-connectivity-bound}
    \abs{\rho_1}\le1-\mu(G).
\end{equation}

Now suppose the lemma fails for some $\varepsilon>0$.  Then there are numbers $\eta_k\downarrow0$ and nontrivial local minima
$(G_k,\theta^{(k)})$ such that
$\mu(G_k)\ge3/4-\eta_k$ and at least one of the asserted bounds fails.
For all sufficiently large $k$, we have $\mu(G_k)>1/2$, which forces
$G_k$ to be connected.  Rotate each configuration so that
\[
    x_k:=\rho_1(\theta^{(k)})
        =\bigl|\rho_1(\theta^{(k)})\bigr|\ge0,
    \qquad
    y_k:=\bigl|\rho_2(\theta^{(k)})\bigr|.
\]
By \eqref{eq:rho1-connectivity-bound}, we have
\[
    x_k\le1-\mu(G_k)\le\frac14+\eta_k.
\]
Since $0\le x_k,y_k\le1$, pass to a subsequence for which
$x_k\to x$ and $y_k\to y$.  The failure of the asserted bounds gives
$\max\{x,y\}\ge\varepsilon$, while $0\le x\le1/4$. Apply \cref{prop:main-rho1-rho2-bound} with
$\mu=3/4-\eta_k$.  Since
\[
    1-2x_k^2+y_k^2
    \ge1-2\Bigl(\frac14+\eta_k\Bigr)^2>0
\]
for all sufficiently large $k$, we may square the resulting inequality
and pass to the limit.  This gives
\begin{equation}\label{eq:limiting-order-parameter-polynomial}
0\le x^2(8y-4)+y^2(4x^2-2)-4x^4-y^4.
\end{equation}
For $0\le y\le1$, the inequality $8y\le2+8y^2$ and the bound
$x^2\le1/16$ imply that the right-hand side of
\eqref{eq:limiting-order-parameter-polynomial} is at most
\begin{align*}
    -2x^2+(12x^2-2)y^2-4x^4-y^4
    &\le-2x^2-\frac54y^2-4x^4-y^4.
\end{align*}
It follows that $x=y=0$, contradicting
$\max\{x,y\}\ge\varepsilon$.
\end{proof}

\section{Four-cluster structure}\label{sec:four-clusters}

We now show that small first and second order parameters force four large
clusters near quarter turns.  In particular, we will use \cref{lem:global-complement-c-sum-ub} and  
\cref{lem:rho1-rho2-ub-main} to prove clustering. All distances between phases are measured by
the circular distance $\distT$ on $\T$.

\begin{lemma}\label{lem:clustering-part-1}
For every $\varepsilon>0$, there is an $\eta>0$ such that the following
holds.  If $\mu(G)\ge3/4-\eta$ and a local minimum $\theta$ satisfies
$\abs{\rho_1},\abs{\rho_2}<\eta$, then there are a phase
$\theta_*\in\T$ and four disjoint sets $S_j\subset V(G)$ with $0\le j \le 3$  such that
\[
    \abs{S_j}\ge\Bigl({\frac14-\varepsilon}\Bigr)n,
    \qquad
    \distT\Bigl({\theta_v,\theta_*+\frac{\pi j}{2}\Bigr)}
    \le\varepsilon
\]
for every $v\in S_j$ and $0\le j\le3$.
\end{lemma}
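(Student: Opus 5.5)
The plan is to squeeze \cref{lem:global-complement-c-sum-ub} against a trivial lower bound until both are nearly tight, read off an antipodal structure vertex by vertex, and then use $\rho_1,\rho_2\approx0$ to pin down the geometry.

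\textbf{Step 1: near-equality in the complement sum.} As in \cref{prop:main-rho1-rho2-bound}, add self-loops so that $w_{uu}=1$ and every $u$ has at most $(1/4+\eta)n$ non-neighbours. Since $|\rho_1|,|\rho_2|<\eta$, \cref{lem:global-complement-c-sum-ub} gives
\[
\sum_{u,v}(1-w_{uv})C(\theta_u-\theta_v)\le -\frac{n^2}{2}+O(\eta n^2).
\]
On the other hand, $C(x)=t-t^2$ with $t=\cos x\in[-1,1]$, so $C\ge-2$, with equality only at $t=-1$. Moreover $C(x)+2\ge c\,\delta^2$ whenever $\distT(x,\pi)\ge\delta$. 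The number of non-edges is at most $(1/4+\eta)n^2$, so the same sum is at least $-n^2/2-O(\eta n^2)$. Comparing the two bounds forces two things. First, the total deficit $\sum_{u,v}(1-w_{uv})(C(\theta_u-\theta_v)+2)$ is $O(\eta n^2)$. Second, the number of non-edges is at least $(1/4-O(\eta))n^2$.

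\textbf{Step 2: typical vertices.} By Markov's inequality, all but $O(\sqrt\eta)n$ vertices $u$ (call them \emph{typical}) satisfy two conditions. They have at least $(1/4-O(\sqrt\eta))n$ non-neighbours. In addition, all but $O(\sqrt\eta)n$ of those non-neighbours $v$ satisfy $\distT(\theta_v,\theta_u+\pi)\le\delta$, where $\delta=\eta^{1/8}$ say.

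Let $\nu$ be the empirical phase measure $\frac1n\sum_v\delta_{\theta_v}$. Then every typical $u$ satisfies
\[
\nu\bigl(B(\theta_u+\pi,\delta)\bigr)\ge \tfrac14-o(1).
\]
Most vertices in that ball are themselves typical. Applying the same property from one of them shows that $\nu(B(\theta_u,2\delta))\ge1/4-o(1)$ for every typical $u$ as well.

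\textbf{Step 3: extracting clusters.} Pick typical vertices greedily whose phases are pairwise more than $4\delta$ apart. Each carries $\nu$-mass at least $1/4-o(1)$ in its $2\delta$-ball, and these balls are disjoint. Hence at most four such vertices can be chosen. Every typical phase then lies within $4\delta$ of a chosen one, so after enlarging the radii to $O(\delta)$, the balls around the chosen phases capture all but $o(1)$ of the mass.

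The chosen balls come in antipodal pairs, since each typical centre has mass near its antipode. The number of pairs is therefore one or two, with masses $a,a'$ for the first pair and $b,b'$ for the second (possibly $b=b'=0$). Expanding the order parameters up to $O(\delta)+o(1)$ errors, with centres at $\alpha,\alpha+\pi,\beta,\beta+\pi$:
\[
\rho_1\approx (a-a')e^{i\alpha}+(b-b')e^{i\beta},\qquad \rho_2\approx (a+a')e^{2i\alpha}+(b+b')e^{2i\beta}.
\]
Since every cluster has mass at least $1/4-o(1)$ and the total mass is about $1$, a single pair would have $a+a'\approx1$ and hence $|\rho_2|\approx1$, a contradiction. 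So there are two pairs with all four masses at least $1/4-o(1)$, which forces each mass to be $1/4\pm o(1)$. Then $|\rho_2|\approx0$ gives $e^{2i\beta}\approx-e^{2i\alpha}$, that is, $\beta\approx\alpha\pm\pi/2$. Setting $\theta_*=\alpha$ and letting $S_j$ be the vertices in the ball around $\theta_*+\pi j/2$ gives the conclusion once $\eta$ is small relative to $\varepsilon$.

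\textbf{Main obstacle.} The delicate part is the bookkeeping in Steps 2 and 3. The radii grow ($\delta$, $2\delta$, $4\delta$) and the exceptional sets accumulate, and all of it has to be kept below $\varepsilon$. The argument also needs the clusters to be separated, so that "mass at least $1/4$ each" genuinely counts disjoint pieces. To avoid tracking explicit constants, I would first try a compactness formulation: take a sequence with $\eta_k\to0$ and pass to a weak limit of $\nu_k$. The limit measure $\nu$ then satisfies $\nu(\{\phi+\pi\})\ge1/4$ for every $\phi$ in a full-measure set, so it is a sum of at most four atoms. The atomic analysis above then applies exactly, and one transfers back to the finite statement by contradiction, as in the proof of \cref{lem:rho1-rho2-ub-main}.
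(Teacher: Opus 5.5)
Your argument is correct in substance. After the common first step, the near-equality in the complement sum from \cref{lem:global-complement-c-sum-ub}, it takes a different route from the paper.

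The paper keeps only the complement edges with $C+2\le\sqrt a$, giving a graph $H$ whose edges all join nearly antipodal vertices. This $H$ has maximum degree at most $(1/4+o(1))n$ and average degree at least $(1/4-o(1))n$. The paper picks an $H$-edge $uv$ between two vertices of nearly maximal degree and sets $S_0=N_H(v)$, $S_2=N_H(u)$ directly. The leftover set $R$ has size $(1/2+o(1))n$. Since $\rho_2\approx0$ and $\cos(2[\theta_x-\theta_*])\approx1$ on $S_0\cup S_2$, one gets $\sum_{x\in R}[1+\cos(2[\theta_x-\theta_*])]=o(n)$. So almost all of $R$ lies near $\theta_*\pm\pi/2$, and repeating the edge trick inside $R$ produces $S_1,S_3$. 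Thus the paper uses $\rho_2$ once, as an extremal identity on the remaining half of the mass, and never counts or pairs clusters.

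You instead first classify the whole empirical phase distribution: at most four clumps of mass at least $1/4$, closed under antipodes. Only then do you use $\rho_2$, to exclude the one-pair case and to fix the angle between the pairs. This costs the packing and pairing bookkeeping. In return it gives a cleaner structural statement: the empirical measures converge to the uniform measure on four quarter-turn atoms. Your weak-limit formulation makes this especially transparent.

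Two repairs are needed if you write it out.
\begin{enumerate}
\item In Step 2, a vertex with deficit at most $\sqrt\eta\,n$ has at most $\sqrt\eta\,n/(c\delta^2)=O(\eta^{1/4})n$ non-antipodal non-neighbours, not $O(\sqrt\eta)n$. This is harmless.
\item In Step 3, ``each typical centre has mass near its antipode'' does not by itself give a pairing. It only gives \emph{some} chosen centre within $5\delta$ of each antipode, and two chosen centres more than $4\delta$ apart can both lie within $5\delta$ of the same antipode. The fix is to group the at most four centres by doubled angle. Each group contains a centre together with a near-antipodal one, so there are at most two groups. One group puts essentially all the mass within $O(\delta)$ of a line through the origin, which contradicts $\abs{\rho_2}<\eta$. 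With two groups, each consists of exactly one genuine antipodal pair, which is your two-pair case.
\end{enumerate}
In the weak-limit version the second issue disappears. By the portmanteau bound on closed balls, every point $\phi$ in the support of the limit $\nu$ satisfies $\nu(\{\phi+\pi\})\ge1/4$. Hence the support is exactly antipodally closed and every atom has mass at least $1/4$.
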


\begin{proof}
Suppose not.  For some fixed $\varepsilon>0$, there is a sequence of
numbers $\eta_k\downarrow0$ and corresponding counterexamples.  We suppress the index $k$ and use $o(1)$ for quantities tending to zero along
this sequence. Let $\overline G$ be the complement of $G$, and put
$m=|{E(\overline G)}|$.  The minimum degree hypothesis gives
\[
    m\le\Bigl(\frac18+o(1)\Bigr)n^2.
\]
On the other hand, dividing the ordered sum \eqref{eq:global-cos-cos2-sum} in
\cref{lem:global-complement-c-sum-ub} by two and using $C(\theta)\ge-2$ give
\[
-\Bigl({\frac14-o(1)}\Bigr)n^2 \geq  \sum_{uv\in E(\overline G)}C(\theta_u-\theta_v) \, \geq -2m
\]
Consequently, we get that 
\begin{equation}\label{eq:complement-near-equality}
    m=\Bigl({\frac18+o(1)}\Bigr)n^2,
    \qquad
    \sum_{uv\in E(\overline G)}
       \bigl[C(\theta_u-\theta_v)+2\bigr]=o(n^2).
\end{equation}

The function $C+2$ is nonnegative and vanishes only at $\pi$ modulo $2\pi$.  Let $a$ be its average over $E(\overline G)$, so $a=o(1)$ by
\eqref{eq:complement-near-equality}.  Retain those complement edges for
which $C+2\le\sqrt a$, and call the
resulting spanning subgraph $H$. By Markov's inequality, the number of discarded edges are most
$\sqrt a\,m=o(n^2)$.  The uniqueness of the zero of $C+2$ then gives, uniformly over the retained edges,
\begin{equation}\label{eq:h-asymptotic-properties}
    \abs{E(H)}=\Bigl({\frac18+o(1)}\Bigr)n^2,
    \qquad
    \distT(\theta_v,\theta_u+\pi)=o(1)
    \;\; \text{ for all } uv\in E(H).
\end{equation}

Choose $r_k\downarrow0$ slowly enough that simultaneously
\[
    \max \deg (H)\le\Bigl({\frac14+r_k}\Bigr)n,
    \qquad
    \frac{2\abs{E(H)}}n\ge\Bigl({\frac14-r_k}\Bigr)n.
\]
Comparing the total degree with the upper bound, Markov's inequality shows that all but at most $O(\sqrt{r_k}\,n)=o(n)$ vertices have $H$-degree at least $(1/4-\sqrt{r_k})n$; call these vertices \emph{good}.  Choose a good vertex $u$.
It has a good neighbor $v$, since it has $(1/4+o(1))n$ neighbors and only
$o(n)$ vertices are not good. Then the sets 
\[
    S_0:=N_H(v),\qquad S_2:=N_H(u)
\]
each have size $(1/4+o(1))n$ and lie within $o(1)$ of $\theta_u$ and
$\theta_u+\pi$, respectively.  Moreover, they are disjoint as  a
common vertex would be nearly antipodal to both endpoints of the edge $uv$, even though those endpoints are themselves nearly antipodal.

Let $R=V\setminus(S_0\cup S_2)$ and set $\theta_*=\theta_u$.  Then
$\abs R=(1/2+o(1))n$.  Since $\abs{\rho_2}=o(1)$ and every vertex $x \in S_0\cup S_2$ satisfies $\cos(2[\theta_x-\theta_*]) = 1-o(1)$, we have
\begin{equation}\label{eq:remaining-doubled-phases}
    \sum_{x\in R}
       \bigl[1+\cos(2[\theta_x-\theta_*])\bigr]=o(n).
\end{equation}
Applying Markov's inequality again with a threshold tending sufficiently slowly to zero shows that all but $o(n)$ vertices of
$R$ lie uniformly within $o(1)$ of one of $\theta_*+\pi/2$ and
$\theta_*+3\pi/2$.  Choose such a vertex $u'$ that is good, and then choose
a good neighbor $v'$ of $u'$ in $H$.  After interchanging the two
quarter-turn centers if necessary, the sets
\[
    S_1:=N_H(v'),\qquad S_3:=N_H(u')
\]
have size $(1/4+o(1))n$ and lie within $o(1)$ of
$\theta_*+\pi/2$ and $\theta_*+3\pi/2$, respectively.  The four sets are
disjoint for large indices because they lie in four arcs of radius $o(1)$
whose centers are separated by exactly $\pi/2$.  They then satisfy the conclusion
with the fixed $\varepsilon$, which is a contradiction.
\end{proof}

In the following definition, we establish the terminology and notation for clustering used throughout the rest of the paper. It is worth noting that the parameters $h, \rho, \alpha,$ and $\beta$ will be taken to be small in the future arguments.

\begin{definition}\label{def:clustering-definition}
Fix $h\in[0,\pi/4)$, and $\rho, \alpha,\beta\geq 0$. Let $G = (V,E)$ be a graph and $\theta : V \to \mathbb{T}$ be a phase configuration. For $j\in\{0,1,2,3\}$, set
\[
    B_j:=\{v\in V:\distT(\theta_v,\pi j/2)\le h\},
    \qquad
    B:=\bigsqcup_{j=0}^3B_j,
    \qquad
    E:=V\setminus B.
\]
 We call $B$ the \emph{bulk} and the vertices in $E$ \emph{exceptional}.  For $v\in B_j$, let
$x_v\in[-h,h]$ be the offset so that $\theta_v=\pi j/2+x_v$ in $\T$. We further define
\[
    \bar t:=\frac1{\abs B}\sum_{v\in B}x_v,
    \qquad
    t_v:=x_v-\bar t.
\]
We say that $(G,\theta)$ is
\emph{$(h,\rho,\alpha,\beta)$-clustered} if the following hold:
\begin{enumerate}
\item $(1/4-\rho)n\le\abs{B_j}\le(1/4+\rho)n$ for every $j$;
\item $\sum_{v\in B}\abs{t_v}\le\alpha |B|$ and $|\bar t|\le\alpha$;
\item $\abs E\leq\beta n$.
\end{enumerate}
\end{definition}

We are now finally ready to prove the main clustering theorem, which will be central to the subsequent sections. To this end we will combine \cref{lem:rho1-rho2-ub-main} and \cref{lem:clustering-part-1}. 

\begin{theorem}\label{thm:clustering-main}
Fix $h\in(0,\pi/4)$, and $\rho, \alpha, \beta > 0$. There is an $\eta>0$ such that every nontrivial
local minimum $(G,\theta)$ with
$\mu(G)\ge3/4-\eta$ is $(h,\rho,\alpha,\beta)$-clustered after a global rotation.
\end{theorem}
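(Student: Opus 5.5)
The plan is to argue by contradiction with a sequence $\eta_k\downarrow0$ and nontrivial local minima $(G_k,\theta^{(k)})$ with $\mu(G_k)\ge 3/4-\eta_k$ that are not $(h,\rho,\alpha,\beta)$-clustered after any rotation. We write $o(1)$ for quantities tending to zero along the sequence. By \cref{lem:rho1-rho2-ub-main}, $\abs{\rho_1},\abs{\rho_2}=o(1)$. We then apply \cref{lem:clustering-part-1} with a parameter $\varepsilon_k\downarrow0$ chosen slowly enough; a diagonal choice is possible because for each fixed $\varepsilon$ the lemma applies for all large $k$. This produces a phase $\theta_*$ and disjoint sets $S_j$ with $\abs{S_j}\ge(1/4-o(1))n$, each within $o(1)$ of $\theta_*+\pi j/2$. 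We rotate so that $\theta_*=0$. For large $k$ we have $o(1)<h$, so $S_j\subset B_j$. Hence $\abs{B_j}\ge(1/4-o(1))n$ for each $j$. Since the $B_j$ are disjoint ($h<\pi/4$), $\sum_j\abs{B_j}\le n$ forces $\abs{B_j}\le(1/4+o(1))n$, which gives condition (1). It also gives $\abs E\le n-\sum_j\abs{S_j}=o(n)$, which is condition (3).

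For condition (2), the offsets satisfy $\abs{x_v}=o(1)$ for $v\in\bigcup_j S_j$ and $\abs{x_v}\le h$ for the remaining $o(n)$ bulk vertices. Hence
\[
\frac1{\abs B}\sum_{v\in B}\abs{x_v}\le o(1)+\frac{h\cdot o(n)}{\abs B}=o(1).
\]
From this, $\abs{\bar t}\le\frac1{\abs B}\sum_{v\in B}\abs{x_v}=o(1)$, and
\[
\sum_{v\in B}\abs{t_v}\le\sum_{v\in B}\abs{x_v}+\abs B\abs{\bar t}=o(\abs B).
\]
So for large $k$ both quantities are below $\alpha$. Thus all three conditions hold for large $k$, contradicting the choice of counterexamples.

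One point needs care: \cref{lem:clustering-part-1} is stated for fixed $\varepsilon$ with $\eta$ depending on it, and its hypothesis requires $\abs{\rho_1},\abs{\rho_2}<\eta$. We handle this as follows. Fix $\varepsilon:=\min\{\rho,\beta/4,\alpha',\dots\}$, where $\alpha'$ is small enough that $\varepsilon+h\cdot4\varepsilon/(1-4\varepsilon)\le\alpha/2$ and $\varepsilon<h$. Let $\eta_1$ be the value \cref{lem:clustering-part-1} gives for this $\varepsilon$. Then let $\eta_2$ be the value \cref{lem:rho1-rho2-ub-main} gives for the target bound $\eta_1$ on the order parameters. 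Taking $\eta:=\min\{\eta_1,\eta_2\}$ yields the statement directly, with no sequences needed. The remaining work is routine bookkeeping of constants, and we expect no real obstacle. The only subtle point is the disjointness of the arcs $B_j$, which is exactly why the definition requires $h<\pi/4$.
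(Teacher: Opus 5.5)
Your proposal is correct and is essentially the paper's proof. Your closing direct choice of constants (fix $\varepsilon$, take the threshold $\eta_1$ from \cref{lem:clustering-part-1}, then feed a smaller target into \cref{lem:rho1-rho2-ub-main}, followed by the same size and offset bookkeeping) is exactly what the paper does, so the sequence/diagonal version is a redundant reformulation.

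Two constants need fixing. First, \cref{lem:clustering-part-1} requires the strict bound $\abs{\rho_1},\abs{\rho_2}<\eta_1$, while \cref{lem:rho1-rho2-ub-main} only yields $\le$, so apply the latter with target $\eta_1/2$, as the paper does with $\gamma/2$. Second, the upper bound $\abs{B_j}\le(1/4+3\varepsilon)n$ needs $\varepsilon\le\rho/3$, not just $\varepsilon\le\rho$.
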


\begin{proof}
Let us fix $\varepsilon>0$ sufficiently small so that all subsequent estimates hold. We first apply \cref{lem:clustering-part-1} with $\varepsilon$, and denote
its minimum degree parameter by $\gamma>0$.  Next apply
\cref{lem:rho1-rho2-ub-main} with $\gamma/2$, choosing the
resulting parameter $\eta>0$ so that $\eta\le\gamma$.

If $(G,\theta)$ is a nontrivial local minimum with
$\mu(G)\ge3/4-\eta$, then \cref{lem:rho1-rho2-ub-main} gives
$|{\rho_1}|,|{\rho_2}|\le\gamma/2$.  Hence
\cref{lem:clustering-part-1} shows that there is a phase $\theta_*$ and disjoint
sets $S_0,S_1,S_2,S_3$ such that
\[
    \abs{S_j}\ge\Bigl({\frac14-\varepsilon}\Bigr)n,
    \qquad
    \distT(\theta_v,\theta_*+\pi j/2)\le\varepsilon
    \quad(v\in S_j).
\]

After a global rotation, assume $\theta_*=0$, and form $B_j,B,E$ as in
\cref{def:clustering-definition}.  Then, as long as $\varepsilon$ is sufficiently small, we have that $S_j\subseteq B_j$, while
$S_k\cap B_j=\varnothing$ for $k\ne j$.  Therefore
\[
    \Bigl({\frac14-\rho}\Bigr)n
    \le\Bigl({\frac14-\varepsilon}\Bigr)n
    \le\abs{S_j}
    \le\abs{B_j} \le n-\sum_{k\ne j}\abs{S_k}
    \le\Bigl({\frac14+3\varepsilon}\Bigr)n
    \le\Bigl({\frac14+\rho}\Bigr)n.
\]
This shows property (1). We also have $|E| = n - |B| \leq 4\varepsilon n \leq \beta n$, which shows property (3). Finally, since $\bigcup_jS_j\subseteq B$, at most $4\varepsilon n$ vertices of $B$ have $\abs{x_v}>\varepsilon$. Hence,
\begin{equation*}
    \frac{1}{|B|}\sum_{v \in B}|x_v| \leq \varepsilon + \frac{4\varepsilon h}{1-4\varepsilon} \leq \frac{\alpha}{2}.
\end{equation*}
Thus $\abs{\bar t}\le \alpha/2$, which in turn shows property (2) as 
\[
    \frac{1}{|B|}\sum_{v\in B}\abs{t_v}
    \le \abs{\bar t} +  \frac{1}{|B|}\sum_{v \in B}|x_v|  \leq \alpha.
\]
This concludes the proof of clustering.
\end{proof}

\section{Test function on the bulk}\label{sec:bulk}
In this section and the next, we take the clustering result from \cref{thm:clustering-main} as an input. That is, we will be assuming that $(G,\theta)$ is $(h,\rho,\alpha,\beta)$-clustered nontrivial local minimum.  We construct a test function $F : V \to \CC$ to certify that the Hessian is not positive semidefinite.  To this end, we first define $F$ on the bulk. Define the restriction $F\vert_B\colon B\to\CC$ by\footnote{The motivation for this choice of test function is to approximate, to first order, the test function $F'(v) = e^{i\theta_v}$ used in \cref{lem:global-complement-c-sum-ub}.}
\begin{equation}\label{eq:test-function-bulk}
    F\vert_B(v):=i^j(1+it_v)\qquad(v\in B_j).
\end{equation}

In the rest of the section, we will upper bound the bulk contribution to \eqref{eq:second-order-condition-statement}, culminating in the main result of this section in \cref{cor:bulk-bound-main}. 

\medskip

For $S_1,S_2\subseteq V$ and $H\colon S_1\cup S_2\to\CC$, define
\[
\cQ_{S_1\times S_2}(H)
:=\sum_{u\in S_1}\sum_{v\in S_2}
 w_{uv}\cos(\theta_u-\theta_v)\abs{H(u)-H(v)}^2.
\]
The second-order condition \eqref{eq:second-order-condition-statement} says $\cQ_{V\times V}(F)\ge 0$. 
Throughout the argument, $C$ will denote a sufficiently large absolute
constant.  It may be enlarged from line to line but the final choice is understood in the statements of the lemmas.  We assume that $h>0$ is smaller than some fixed absolute constant; its value will be chosen after the bulk estimate is established.  All remaining error parameters will then be chosen sufficiently small in terms of $h$.

For a fixed clustering, abbreviate
\begin{align*}
\sum_{\mathrm{same}}f(u,v)
&:=\sum_{j=0}^3\sum_{u\in B_j}\sum_{v\in B_j}f(u,v),\\
\sum_{\mathrm{adj}}f(u,v)
&:=\sum_{j=0}^3\sum_{u\in B_j}\sum_{v\in B_{j+1}}f(u,v),\\
\sum_{\mathrm{opp}}f(u,v)
&:=\sum_{j=0}^3\sum_{u\in B_j}\sum_{v\in B_{j+2}}f(u,v).
\end{align*}
Thus the same-cluster and opposite-cluster sums count undirected edges twice, whereas the adjacent-cluster sum counts them once.  We also write $\Delta_{uv}:=t_u-t_v,$ and note that it is typically tiny.

We first use the first-order condition \eqref{eq:first-order-condition-statement} to capture the cancellation between adjacent clusters. This cancellation will be crucial for bounding $\cQ_{B \times B}(F)$ in \cref{lem:q-b-b-bound-step-1}. 

To see the intuition behind \cref{lem:first-order-condition}, fix \(u\in B_j\). For vertices in \(B_j\) or \(B_{j+2}\), the phase difference is close to a multiple of \(\pi\), so the corresponding sine term is small. By contrast, the phase differences from \(u\) to \(B_{j-1}\) and \(B_{j+1}\) are close to \(-\pi/2\) and \(\pi/2\), respectively. The first-order condition therefore forces the contributions from these two adjacent clusters to nearly cancel. In precise terms, although the sum
\[
\sum_{\mathrm{adj}}w_{uv}\Delta_{uv}\cos\Delta_{uv}
\]
appears to be linear in \(\Delta_{uv}\), \cref{lem:first-order-condition} expresses it in terms of sums of smaller order. 

\begin{lemma}\label{lem:first-order-condition}
Suppose $(G,\theta)$ is $(h,\rho,\alpha,\beta)$-clustered.  There is a
number $\mathcal R$ with $\abs{\mathcal R}\le n^2\alpha\beta$ such that
\begin{equation}\label{eq:first-order-summed}
\sum_{\mathrm{adj}}w_{uv}\Delta_{uv}\cos\Delta_{uv}=
-\frac12\sum_{\mathrm{opp}}w_{uv}\Delta_{uv}\sin\Delta_{uv}+\frac12\sum_{\mathrm{same}}w_{uv}\Delta_{uv}\sin\Delta_{uv}
+\mathcal R.
\end{equation}
\end{lemma}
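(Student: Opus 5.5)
The plan is to multiply the equilibrium equation \eqref{eq:first-order-condition-statement} at each bulk vertex $u$ by the scalar $t_u$ and sum over $u\in B$. This turns the first-order condition into an identity for sums of $\Delta_{uv}$ against trigonometric factors. The pairs with both endpoints in $B$ give the three structured sums in \eqref{eq:first-order-summed}. The pairs with one endpoint in $E$ give the remainder $\mathcal R$.

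First I would record the phase differences inside the bulk. For $u\in B_j$ and $v\in B_k$ we have $\theta_u-\theta_v=\tfrac{\pi}{2}(j-k)+x_u-x_v$ in $\T$. Since $x_u-x_v=t_u-t_v=\Delta_{uv}$, this equals $\tfrac{\pi}{2}(j-k)+\Delta_{uv}$. Hence $\sin(\theta_u-\theta_v)$ takes the following values:
\begin{itemize}
\item $\sin\Delta_{uv}$ if $k=j$;
\item $-\sin\Delta_{uv}$ if $k=j+2$;
\item $-\cos\Delta_{uv}$ if $k=j+1$;
\item $\cos\Delta_{uv}$ if $k=j-1$.
\end{itemize}

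Summing $t_u\sum_v w_{uv}\sin(\theta_u-\theta_v)=0$ over $u\in B$ gives
\[
\sum_{u\in B}\sum_{v\in B}w_{uv}t_u\sin(\theta_u-\theta_v)
+\sum_{u\in B}\sum_{v\in E}w_{uv}t_u\sin(\theta_u-\theta_v)=0 .
\]

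In the $B\times B$ sum the kernel $w_{uv}\sin(\theta_u-\theta_v)$ is antisymmetric. So I would symmetrize and replace $t_u$ by $\tfrac12(t_u-t_v)=\tfrac12\Delta_{uv}$. Then I would split the ordered pairs by the relative cluster index.
\begin{itemize}
\item The same-cluster pairs contribute $\tfrac12\sum_{\mathrm{same}}w_{uv}\Delta_{uv}\sin\Delta_{uv}$.
\item The opposite-cluster pairs contribute $-\tfrac12\sum_{\mathrm{opp}}w_{uv}\Delta_{uv}\sin\Delta_{uv}$.
\item For adjacent clusters, each unordered pair $u\in B_j$, $v\in B_{j+1}$ appears twice, as $(u,v)$ and $(v,u)$. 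Both orderings give the same value $-\tfrac12\Delta_{uv}\cos\Delta_{uv}$, using that $\Delta$ and $\sin$ are both odd and cosine is even. Since $\sum_{\mathrm{adj}}$ counts each such pair once, the total is $-\sum_{\mathrm{adj}}w_{uv}\Delta_{uv}\cos\Delta_{uv}$.
\end{itemize}
Rearranging yields \eqref{eq:first-order-summed} with
\[
\mathcal R:=\sum_{u\in B}\sum_{v\in E}w_{uv}t_u\sin(\theta_u-\theta_v).
\]

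Finally, I would bound the remainder using only the clustering hypotheses:
\[
\abs{\mathcal R}\le\abs{E}\sum_{u\in B}\abs{t_u}\le\beta n\cdot\alpha\abs B\le\alpha\beta n^2 .
\]
The main point requiring care is the bookkeeping of signs and multiplicities: the differing double versus single counting conventions in $\sum_{\mathrm{same}}$, $\sum_{\mathrm{opp}}$ and $\sum_{\mathrm{adj}}$, and the orientation $j\to j\pm1$ that determines the sign of the cosine terms. I would verify these against the definitions before finalizing. The argument uses neither the second-order condition nor the smallness of $h$, only the equilibrium equations and properties (2)–(3) of \cref{def:clustering-definition}.
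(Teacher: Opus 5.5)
Your proposal is correct and is essentially the paper's proof: you multiply the equilibrium equation at each $u\in B$ by $t_u$, sum over $B$, and symmetrize the antisymmetric $B\times B$ kernel into the same, adjacent and opposite sums. You then collect the $B\times E$ terms into $\mathcal R=\sum_{u\in B}\sum_{v\in E}t_uw_{uv}\sin(\theta_u-\theta_v)$ and bound it by $\abs{E}\sum_{u\in B}\abs{t_u}\le n^2\alpha\beta$, and your sign and multiplicity bookkeeping for the adjacent pairs (both orderings contributing $-\tfrac12 w_{uv}\Delta_{uv}\cos\Delta_{uv}$) checks out.
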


\begin{proof}
For $u\in B_j$, the first-order condition \eqref{eq:first-order-condition-statement} translates to
\begin{align*}
0=&\sum_{v\in B_j}w_{uv}\sin\Delta_{uv}
-\sum_{v\in B_{j+1}}w_{uv}\cos\Delta_{uv}
+\sum_{v\in B_{j-1}}w_{uv}\cos\Delta_{uv}\\
&-\sum_{v\in B_{j+2}}w_{uv}\sin\Delta_{uv}
+\sum_{v\in E}w_{uv}\sin(\theta_u-\theta_v).
\end{align*}
Multiply the whole equation by $t_u$ and sum over $u\in B$. Note that symmetrization gives 
\begin{gather*}
\sum_{j = 0}^3\sum_{u\in B_j}\sum_{v\in B_j}t_uw_{uv}\sin\Delta_{uv}
=\frac12\sum_{\mathrm{same}}
  w_{uv}\Delta_{uv}\sin\Delta_{uv} \\
    \sum_{j = 0}^3\sum_{u\in B_j}\sum_{v\in B_{j+2}}
t_uw_{uv}\sin\Delta_{uv}
=\frac12\sum_{\mathrm{opp}}
  w_{uv}\Delta_{uv}\sin\Delta_{uv}. \\ 
   \sum_{j = 0}^3\sum_{u\in B_j }\sum_{v\in B_{j-1}} t_uw_{uv}\cos\Delta_{uv} -  \sum_{j = 0}^3\sum_{u\in B_j }\sum_{v\in B_{j+1}} t_uw_{uv}\cos\Delta_{uv}
=-\sum_{\mathrm{adj}}w_{uv}\Delta_{uv}\cos\Delta_{uv}
\end{gather*}
Therefore, \eqref{eq:first-order-summed} holds with
\[
    \mathcal R=\sum_{u\in B}\sum_{v\in E}
       t_uw_{uv}\sin(\theta_u-\theta_v).
\]
The defining $L^1$ bound for the clustering gives
$\abs{\mathcal R}\le\abs E\sum_{u\in B}\abs{t_u}\le n^2\alpha\beta$.
\end{proof}

We now define $\sigma\ge0$ by
\begin{equation}\label{eq:sigma-definition}
    n\sigma^2:=\sum_{v\in B}t_v^2.
\end{equation}
Our eventual goal is to bound $\mathcal{Q}_{B\times B}(F)$, and later $\mathcal{Q}_{V\times V}(F)$, relative to $\sigma^2$, so that the second-order condition $\mathcal{Q}_{V \times V}(F) = 0$ will ultimately force $\sigma = 0$.

\begin{lemma}\label{lem:q-b-b-bound-step-1}
For the test function in \eqref{eq:test-function-bulk},
\begin{equation}\label{eq:q-b-times-b-final-ub}
\cQ_{B\times B}(F)
\le 3\sum_{\mathrm{same}}\Delta_{uv}^2
-4\sum_{\mathrm{adj}}w_{uv}\Delta_{uv}^2
-\paren{4-Ch^2}\sum_{\mathrm{opp}}w_{uv}\notag+Cn^2\paren{\alpha\beta+h\sigma^2}.
\end{equation}
\end{lemma}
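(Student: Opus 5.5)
The plan is to expand $\cQ_{B\times B}(F)$ cluster pair by cluster pair, writing $\theta_u-\theta_v$ in terms of $\Delta_{uv}$. Then \cref{lem:first-order-condition} will absorb the one term that is linear in $\Delta$. Throughout we use $|x_v|\le h$ and $|\bar t|\le\alpha\le h$, so $|t_v|\le 2h$ and $|\Delta_{uv}|\le 4h$. We also use $\sum_{v\in B}t_v=0$, which gives
\[
\sum_{u,v\in B}\Delta_{uv}^2=2|B|\,n\sigma^2\le 2n^2\sigma^2 .
\]
This identity is how every cubic-or-higher error of the form $h\Delta^2$ or $h(t_u^2+t_v^2)$ becomes $Cn^2h\sigma^2$.

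\emph{Same clusters.} For $u,v\in B_j$ we have $\theta_u-\theta_v=\Delta_{uv}$ and $|F(u)-F(v)|^2=\Delta_{uv}^2$. Using $w_{uv}\cos\Delta_{uv}\le1$, the contribution is at most $\sum_{\mathrm{same}}\Delta_{uv}^2$.

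\emph{Opposite clusters.} For $u\in B_j$, $v\in B_{j+2}$ we have $\cos(\theta_u-\theta_v)=-\cos\Delta_{uv}<0$ for $h$ small. Also $|F(u)-F(v)|^2=|2+i(t_u+t_v)|^2=4+(t_u+t_v)^2\ge4$. Since $\cos\Delta\ge 1-8h^2$, the contribution is at most $-(4-Ch^2)\sum_{\mathrm{opp}}w_{uv}$. This is the main negative term.

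\emph{Adjacent clusters.} For $u\in B_j$, $v\in B_{j+1}$ we have $\cos(\theta_u-\theta_v)=\cos(-\pi/2+\Delta_{uv})=\sin\Delta_{uv}$. Moreover
\[
|F(u)-F(v)|^2=|(1+t_v)+i(t_u-1)|^2=2-2\Delta_{uv}+t_u^2+t_v^2 .
\]
Both orderings $(u,v)$ and $(v,u)$ occur in $\cQ_{B\times B}$, and $\sum_{\mathrm{adj}}$ counts each pair once. So the adjacent contribution is
\[
4\sum_{\mathrm{adj}}w_{uv}\sin\Delta_{uv}-4\sum_{\mathrm{adj}}w_{uv}\Delta_{uv}\sin\Delta_{uv}+2\sum_{\mathrm{adj}}w_{uv}\sin\Delta_{uv}(t_u^2+t_v^2).
\]
The third sum is at most $Ch\sum_{v\in B}t_v^2\cdot n\le Cn^2h\sigma^2$. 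In the second sum, $\Delta\sin\Delta\ge\Delta^2-\Delta^4/6\ge(1-Ch^2)\Delta^2$, which yields $-4\sum_{\mathrm{adj}}w_{uv}\Delta_{uv}^2+Cn^2h\sigma^2$.

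The main obstacle is the first-order term $4\sum_{\mathrm{adj}}w_{uv}\sin\Delta_{uv}$, which is a priori only $O(n^2h)$. To control it, write $\sin\Delta=\Delta\cos\Delta+O(|\Delta|^3)$, with $|\Delta|^3\le Ch\Delta^2$. This reduces the term to $4\sum_{\mathrm{adj}}w_{uv}\Delta_{uv}\cos\Delta_{uv}+Cn^2h\sigma^2$. Now invoke \cref{lem:first-order-condition}, which turns this into
\[
-2\sum_{\mathrm{opp}}w_{uv}\Delta_{uv}\sin\Delta_{uv}+2\sum_{\mathrm{same}}w_{uv}\Delta_{uv}\sin\Delta_{uv}+4\mathcal R .
\]
The opposite sum is nonpositive, since $\Delta\sin\Delta\ge0$ for $|\Delta|\le4h<\pi$, so we drop it. The same-cluster sum is at most $2\sum_{\mathrm{same}}\Delta_{uv}^2$. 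Finally $|4\mathcal R|\le 4n^2\alpha\beta$.

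Adding the same-cluster contribution $\sum_{\mathrm{same}}\Delta^2$ to the $2\sum_{\mathrm{same}}\Delta^2$ from this step gives the coefficient $3$. Collecting all error terms yields the claimed bound, with the constant $C$ enlarged as needed.
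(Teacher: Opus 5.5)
Your proposal is correct and follows the paper's proof essentially step for step. You use the same five-term expansion of $\cQ_{B\times B}(F)$ and the same cubic error bound to replace $\sin\Delta_{uv}$ by $\Delta_{uv}\cos\Delta_{uv}$, then apply \cref{lem:first-order-condition}, drop the nonpositive opposite-cluster term, and bound the same-cluster term by $2\sum_{\mathrm{same}}\Delta_{uv}^2$ to reach the coefficient $3$. The only difference is cosmetic: you use the exact identity $\sum_{u,v\in B}\Delta_{uv}^2=2|B|n\sigma^2$ (from $\sum_{v\in B}t_v=0$) where the paper uses $\Delta_{uv}^2\le 2(t_u^2+t_v^2)$.
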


\begin{proof}
A direct expansion gives
\begin{align}\label{eq:q-b-times-b-expansion}
\cQ_{B\times B}(F)
={}&\sum_{\mathrm{same}}w_{uv}\Delta_{uv}^2\cos\Delta_{uv}
+4\sum_{\mathrm{adj}}w_{uv}\sin\Delta_{uv}
-4\sum_{\mathrm{adj}}w_{uv}\Delta_{uv}\sin\Delta_{uv}\notag\\
&+2\sum_{\mathrm{adj}}w_{uv}(t_u^2+t_v^2)\sin\Delta_{uv}
-\sum_{\mathrm{opp}}w_{uv}
  \bigl(4+(t_u+t_v)^2\bigr)\cos\Delta_{uv}.
\end{align}
Since $\abs{t_v}\le h+\alpha\le2h$, we have
$\abs{\Delta_{uv}}\le4h$ and
\begin{equation}\label{eq:cubic-term-bound}
\sum_{u,v\in B}\abs{\Delta_{uv}}^3 \le4h\sum_{u,v\in B}\Delta_{uv}^2
 \le8h\sum_{u,v\in B}(t_u^2+t_v^2)\le Chn^2\sigma^2.
\end{equation}
We estimate the five terms in \eqref{eq:q-b-times-b-expansion} in order.
First,
\[
\sum_{\mathrm{same}}w_{uv}\Delta_{uv}^2\cos\Delta_{uv}
\le\sum_{\mathrm{same}}\Delta_{uv}^2.
\]
Next, the elementary inequality
$\sin x\le x\cos x+\abs{x}^3$ for $\abs{x}\le4h$, together with
\cref{lem:first-order-condition} and \eqref{eq:cubic-term-bound}, gives
\begin{align*}
4\sum_{\mathrm{adj}}w_{uv}\sin\Delta_{uv}
&\le4\sum_{\mathrm{adj}}w_{uv}\Delta_{uv}\cos\Delta_{uv}
  +4\sum_{\mathrm{adj}}\abs{\Delta_{uv}}^3\\
&\le2\sum_{\mathrm{same}}w_{uv}\Delta_{uv}\sin\Delta_{uv}
  -2\sum_{\mathrm{opp}}w_{uv}\Delta_{uv}\sin\Delta_{uv}
  +Cn^2\paren{\alpha\beta+h\sigma^2}\\
&\le2\sum_{\mathrm{same}}w_{uv}\Delta_{uv}^2
  +Cn^2\paren{\alpha\beta+h\sigma^2}.
\end{align*}
Here $x\sin x\ge0$ and $x\sin x\le x^2$ on the relevant interval.
Also,
\begin{equation*}
-4\sum_{\mathrm{adj}}w_{uv}\Delta_{uv}\sin\Delta_{uv}
\le-4\sum_{\mathrm{adj}}w_{uv}\Delta_{uv}^2
  +\sum_{\mathrm{adj}}w_{uv}\Delta_{uv}^4\le-4\sum_{\mathrm{adj}}w_{uv}\Delta_{uv}^2
  +Ch^2n^2\sigma^2
\end{equation*}
using that $x \sin x \ge x^2 - x^4/4$. Furthermore,
\[
2\sum_{\mathrm{adj}}w_{uv}(t_u^2+t_v^2)\sin\Delta_{uv}
\le Chn^2\sigma^2.
\]
Finally, $\cos\Delta_{uv}\ge1-8h^2$, and hence
\[
-\sum_{\mathrm{opp}}w_{uv}
\bigl(4+(t_u+t_v)^2\bigr)\cos\Delta_{uv}
\le\paren{-4+Ch^2}\sum_{\mathrm{opp}}w_{uv}.
\]
Adding these estimates proves \eqref{eq:q-b-times-b-final-ub}.
\end{proof}

\begin{lemma}\label{lem:q-b-b-bound-step-2}
Suppose $(G,\theta)$ is $(h,\rho,\alpha,\beta)$-clustered and
$\mu(G)\ge3/4-\eta$.  Then
\begin{equation}\label{eq:bulk-combinatorial-bound}
3\sum_{\mathrm{same}}\Delta_{uv}^2
-4\sum_{\mathrm{adj}}w_{uv}\Delta_{uv}^2
-\paren{4-Ch^2}\sum_{\mathrm{opp}}w_{uv}
\le\paren{-\frac12+C\rho+C\eta}n^2\sigma^2
\end{equation} for sufficiently small $h$.
\end{lemma}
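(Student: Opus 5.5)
The plan is to bound the three sums separately, each in terms of $n\sigma^2=\sum_{v\in B}t_v^2$. Each vertex $v\in B$ should effectively contribute about $\tfrac32 t_v^2 n$ from same-cluster pairs and about $-2t_v^2 n$ from adjacent-cluster pairs. Whatever is lost from adjacent pairs because of missing edges is to be paid for by the opposite-cluster term, using the minimum degree hypothesis together with $\abs{t_v}\le 2h$.

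\emph{Same-cluster term.} The sum $\sum_{\mathrm{same}}\Delta_{uv}^2$ is unweighted. Writing $T_j:=\sum_{v\in B_j}t_v$, for each $j$ we have
\[
\sum_{u,v\in B_j}\Delta_{uv}^2=2\abs{B_j}\sum_{v\in B_j}t_v^2-2T_j^2\le 2\Bigl(\tfrac14+\rho\Bigr)n\sum_{v\in B_j}t_v^2 .
\]
Summing over $j$ gives $3\sum_{\mathrm{same}}\Delta_{uv}^2\le(\tfrac32+6\rho)n^2\sigma^2$.

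\emph{Unweighted adjacent term.} First I treat the adjacent sum with all edges present. Expanding,
\[
\sum_{\mathrm{adj}}\Delta_{uv}^2=\sum_j\Bigl(\abs{B_{j+1}}\sum_{B_j}t^2+\abs{B_j}\sum_{B_{j+1}}t^2\Bigr)-2\sum_jT_jT_{j+1}.
\]
The cross term factors as $2(T_0+T_2)(T_1+T_3)$. Since $\sum_{v\in B}t_v=0$ by the definition of $t_v$, this equals $-2(T_0+T_2)^2\le0$. Each vertex sees two adjacent clusters of size at least $(\tfrac14-\rho)n$, so
\[
\sum_{\mathrm{adj}}\Delta_{uv}^2\ge(\tfrac12-2\rho)n^2\sigma^2,
\qquad\text{hence}\qquad
-4\sum_{\mathrm{adj}}\Delta_{uv}^2\le(-2+8\rho)n^2\sigma^2 .
\]
The weighted sum is this minus $\sum_{\mathrm{adj}}(1-w_{uv})\Delta_{uv}^2$.

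\emph{Missing adjacent edges versus opposite edges.} This is the main step. For $u\in B_j$, let $m_u$ be the number of non-neighbours of $u$ in $B_{j-1}\cup B_{j+1}$, and $o_u$ the number of neighbours of $u$ in $B_{j+2}$. The vertex $u$ has at most $(\tfrac14+\eta)n$ non-neighbours in total. Hence at most $(\tfrac14+\eta)n-m_u$ of them lie in $B_{j+2}$, which gives
\[
o_u\ge\abs{B_{j+2}}-\Bigl(\tfrac14+\eta\Bigr)n+m_u\ge m_u-(\rho+\eta)n .
\]
Using $\Delta_{uv}^2\le 2t_u^2+2t_v^2$ and charging each missing adjacent pair to both endpoints, I get
\[
4\sum_{\mathrm{adj}}(1-w_{uv})\Delta_{uv}^2\le 8\sum_{u\in B}t_u^2m_u\le 8\sum_{u\in B}t_u^2o_u+8(\rho+\eta)n^2\sigma^2 .
\]
On the other hand, $\sum_{\mathrm{opp}}w_{uv}=\sum_{u\in B}o_u$, and $t_u^2\le 4h^2$. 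Therefore
\[
8\sum_{u\in B}t_u^2o_u-(4-Ch^2)\sum_{u\in B}o_u\le(32h^2+Ch^2-4)\sum_{u\in B}o_u\le0
\]
for $h$ below an absolute constant. This is where the smallness of $h$ enters.

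Adding the three estimates gives the bound $\bigl(\tfrac32-2+C\rho+C\eta\bigr)n^2\sigma^2=\bigl(-\tfrac12+C\rho+C\eta\bigr)n^2\sigma^2$, as claimed. The delicate points are the bookkeeping of pair multiplicities in the three sums and the observation that $\sum_B t_v=0$ makes the cross term nonpositive. Both should be checked carefully, but neither appears to be a real obstruction.
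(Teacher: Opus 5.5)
Your proposal is correct and follows the paper's proof essentially step for step. It uses the same expansion of the same-cluster and unweighted adjacent sums, with $\sum_j T_jT_{j+1}=-(T_0+T_2)^2\le 0$, and the same per-vertex bound: missing adjacent pairs number at most $(\rho+\eta)n$ plus the opposite-cluster neighbours, and this is absorbed by $-(4-Ch^2)\sum_{\mathrm{opp}}w_{uv}$ for small $h$ (your bound $\abs{t_v}\le 2h$ holds automatically since $\abs{\bar t}\le h$, where the paper instead invokes $\alpha\le h/2$).
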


\begin{proof}
Write
\begin{equation}\label{eq:delta-squared-sum-complement-rewrite}
3\sum_{\mathrm{same}}\Delta_{uv}^2
-4\sum_{\mathrm{adj}}w_{uv}\Delta_{uv}^2
=3\sum_{\mathrm{same}}\Delta_{uv}^2
-4\sum_{\mathrm{adj}}\Delta_{uv}^2
+4\sum_{\mathrm{adj}}(1-w_{uv})\Delta_{uv}^2.
\end{equation}
Let $T_j=\sum_{u\in B_j}t_u$.  Then
\begin{align*}
\sum_{\mathrm{same}}\Delta_{uv}^2
&\le\paren{\frac12+2\rho}n^2\sigma^2
  -2\sum_jT_j^2,\\
\sum_{\mathrm{adj}}\Delta_{uv}^2
&\ge\paren{\frac12-2\rho}n^2\sigma^2
  -2\sum_jT_jT_{j+1}.
\end{align*}
Consequently,
\begin{align}
3\sum_{\mathrm{same}}\Delta_{uv}^2
-4\sum_{\mathrm{adj}}\Delta_{uv}^2
&\le\paren{-\frac12+C\rho}n^2\sigma^2
 -6\sum_jT_j^2+8\sum_jT_jT_{j+1}\notag\\
&\le\paren{-\frac12+C\rho}n^2\sigma^2.
\label{eq:same-adj-delta-sum}
\end{align}
The last step uses $\sum_jT_j=0$ (from the choice of $\bar t$ and $t_v$ in \cref{def:clustering-definition}) and
\[
    \sum_jT_jT_{j+1}
    =(T_0+T_2)(T_1+T_3)=-(T_0+T_2)^2\le0.
\]
For the complement term, $\sum_v(1-w_{uv})\le(1/4+\eta)n$ and
$\Delta_{uv}^2\le2(t_u^2+t_v^2)$ give
\begin{align}
\sum_{\mathrm{adj}}(1-w_{uv})\Delta_{uv}^2
&\le2\sum_j\sum_{u\in B_j}t_u^2
 \sum_{v\in B_{j-1}\cup B_{j+1}}(1-w_{uv})\notag\\
&\le2\sum_j\sum_{u\in B_j}t_u^2
 \left((\eta+\rho)n+\sum_{v\in B_{j+2}}w_{uv}\right)\notag\\
&\le2(\eta+\rho)n^2\sigma^2
  +2(h+\alpha)^2\sum_{\mathrm{opp}}w_{uv}.
\label{eq:adj-complement-sum}
\end{align}
Combining \eqref{eq:delta-squared-sum-complement-rewrite},
\eqref{eq:same-adj-delta-sum}, and \eqref{eq:adj-complement-sum}, the
left-hand side of \eqref{eq:bulk-combinatorial-bound} is at most
\[
\paren{-\frac12+C\rho+C\eta}n^2\sigma^2
-\bigl(4-Ch^2-8(h+\alpha)^2\bigr)
 \sum_{\mathrm{opp}}w_{uv}.
\]
The last term is nonpositive once the $h$ is chosen sufficiently small, since $\alpha\le h/2$.
\end{proof}

\begin{corollary}\label{cor:bulk-bound-main}
There are absolute constants $h_0,c,C>0$ with the following property.
If $h \in (0, h_0)$, and  $(G,\theta)$ is
$(h,\rho,\alpha,\beta)$-clustered with $\mu(G)\ge3/4-\eta$, then
\begin{equation}\label{eq:bulk-bound-main}
\cQ_{B\times B}(F)
\le\paren{-c+C\rho+C\eta}n^2\sigma^2
  +Cn^2\alpha\beta.
\end{equation}
\end{corollary}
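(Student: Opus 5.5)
The corollary should follow by chaining \cref{lem:q-b-b-bound-step-1} and \cref{lem:q-b-b-bound-step-2}, and then absorbing the $h\sigma^2$ error term into the main negative term by fixing $h$ small.

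First, \cref{lem:q-b-b-bound-step-1} gives
\[
\cQ_{B\times B}(F)\le 3\sum_{\mathrm{same}}\Delta_{uv}^2-4\sum_{\mathrm{adj}}w_{uv}\Delta_{uv}^2-\paren{4-C_1h^2}\sum_{\mathrm{opp}}w_{uv}+C_1n^2\paren{\alpha\beta+h\sigma^2},
\]
with an absolute constant $C_1$. The first three terms are exactly the left-hand side of \eqref{eq:bulk-combinatorial-bound}. Provided the constant $C$ there is at least $C_1$, \cref{lem:q-b-b-bound-step-2} bounds them by $\paren{-\tfrac12+C_2\rho+C_2\eta}n^2\sigma^2$ once $h$ is below some absolute threshold $h_1$. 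If the constants in the two lemmas are not literally the same, note that increasing the coefficient of $\sum_{\mathrm{opp}}w_{uv}$ only helps. Indeed, $\sum_{\mathrm{opp}}w_{uv}\ge0$, so we may take the larger constant in both lemmas.

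Adding the two bounds gives
\[
\cQ_{B\times B}(F)\le\paren{-\tfrac12+C_1h+C_2\rho+C_2\eta}n^2\sigma^2+C_1n^2\alpha\beta.
\]
Now choose $h_0:=\min\{h_1,1/(4C_1)\}$. Then for every $h\in(0,h_0)$ we have $-\tfrac12+C_1h\le-\tfrac14$. This yields \eqref{eq:bulk-bound-main} with $c=1/4$ and $C=\max\{C_1,C_2\}$.

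The only point needing care is bookkeeping. We must check that the hypotheses implicitly used in the two lemmas hold uniformly for $h<h_0$. These are $|t_v|\le 2h$, which uses $\alpha\le h$, and $\alpha\le h/2$ in \cref{lem:q-b-b-bound-step-2}. Since the paper fixes the remaining error parameters small in terms of $h$, these conditions are available. I expect no genuine obstacle beyond making sure every constant is absolute and independent of $\rho,\eta,\alpha,\beta$.
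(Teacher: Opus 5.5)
Your proposal is correct and is the paper's own argument. The paper likewise feeds \cref{lem:q-b-b-bound-step-1} into \cref{lem:q-b-b-bound-step-2}, taking the latter's $h^2$-coefficient at least as large as the former's, and then picks $h_0$ small enough that $Ch_0$ is absorbed into the $-\tfrac12$, which gives $c=1/4$. The implicit requirement $\alpha\le h/2$ that you flag is the same standing assumption the paper relies on.
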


\begin{proof}
Combine \cref{lem:q-b-b-bound-step-1,lem:q-b-b-bound-step-2}, and choose
$h_0$ small enough to absorb the term $Ch$ into the negative constant
and to ensure $h_0<\pi/4$.
\end{proof}

For the remainder of the proof, fix one $h\in(0,h_0]$.

\section{Test function outside the bulk}\label{sec:exceptional}

The purpose of this section is extend $F$ to exceptional vertices $E$, so that we get  uniformly negative contribution from each vertex. The idea is to first begin with the idealized setting in which every vertex of \(B_j\) lies exactly at the phase \(\pi j/2\), while a single exceptional vertex \(u\) remains away from each cluster center. In \cref{lem:exceptional-ideal-case}, we show that in this case, one can choose a unit test value \(z_u\) for which the contribution of \(u\) to \eqref{eq:second-order-condition-statement} is at most \(-\kappa n\).

For a general clustered configuration, the cluster phases are only approximately centered, and the normalized edge weights satisfy only an approximate balance condition. However, we can use perturbative argument to see that exceptional vertices still supply sufficiently negative contribution needed to control the full quadratic form \(\cQ_{V\times V}\).

We first isolate the elementary trigonometric estimate used in the exceptional-vertex argument.

\begin{lemma}\label{lem:trig-ineq}
For $0<\phi<\pi/2$, let $\Xi(\phi):=\sqrt{1+\sin^2\phi}+\sqrt{1+\cos^2\phi}.$ Then
\[
    \frac{\Xi(\phi)}
    {\Xi(\phi)+\cos\phi+\sin\phi}<\frac1{\sqrt2}.
\]
\end{lemma}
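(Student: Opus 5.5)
The plan is to reduce the inequality to a one-variable statement in $q:=\sin 2\phi$. I will then show that this statement holds with equality at $q=0$ (the degenerate endpoint $\phi\to0$) and is strictly increasing in $q$.

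\textbf{Reduction.} Write $s=\sin\phi$, $c=\cos\phi$, both positive on $(0,\pi/2)$. Since all quantities are positive, the claim $\Xi/(\Xi+c+s)<1/\sqrt2$ is equivalent to $\sqrt2\,\Xi<\Xi+c+s$. This is the same as
\[
(\sqrt2-1)\,\Xi<c+s .
\]
Both sides are positive, so it suffices to compare squares, i.e.\ to prove $(3-2\sqrt2)\,\Xi^2<(c+s)^2$.

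\textbf{Squaring both sides.} On the right, $(c+s)^2=1+q$. On the left,
\[
\Xi^2=(1+s^2)+(1+c^2)+2\sqrt{(1+s^2)(1+c^2)}=3+2\sqrt{2+s^2c^2}.
\]
Since $s^2c^2=q^2/4$, this gives $\Xi^2=3+\sqrt{8+q^2}$. The claim therefore becomes $f(q)>0$, where
\[
f(q):=1+q-(3-2\sqrt2)\bigl(3+\sqrt{8+q^2}\bigr),
\]
and $q=\sin2\phi\in(0,1]$ for $\phi\in(0,\pi/2)$.

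\textbf{Monotonicity.} At $q=0$ we get
\[
f(0)=1-(3-2\sqrt2)(3+2\sqrt2)=1-1=0.
\]
Differentiating,
\[
f'(q)=1-(3-2\sqrt2)\,\frac{q}{\sqrt{8+q^2}}.
\]
Here $0<3-2\sqrt2<1$ and $0\le q/\sqrt{8+q^2}<1$, so $f'(q)>0$ on $[0,1]$. Hence $f$ is strictly increasing, so $f(q)>0$ for every $q\in(0,1]$, which proves the lemma.

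\textbf{Main obstacle.} The only delicate point is that the inequality is sharp in the limit $\phi\to0$ (and symmetrically $\phi\to\pi/2$). A crude estimate, such as $\Xi\le\sqrt6$ from concavity, cannot work. The substitution $q=\sin2\phi$ is what exposes the exact equality $f(0)=0$ and lets monotonicity supply the strictness.
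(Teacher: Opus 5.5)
Your proof is correct and follows the paper's argument. Both reduce the claim to $\Xi(\phi)<(1+\sqrt2)(\sin\phi+\cos\phi)$, square it using $\Xi^2=3+2\sqrt{2+\sin^2\phi\cos^2\phi}$, and conclude from equality at the degenerate endpoint together with a positive derivative; the differences are cosmetic (your $q=\sin2\phi$ is twice the paper's $p=\sin\phi\cos\phi$, and you use a direct equivalence chain where the paper uses monotonicity of $x\mapsto x/(x+\sin\phi+\cos\phi)$).
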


\begin{proof}
Put $s=\sin\phi$, $c=\cos\phi$, and $p=sc>0$.  Then
\[
    \Xi(\phi)^2=3+2\sqrt{2+p^2},
    \qquad (s+c)^2=1+2p.
\]
The difference $ (3+2\sqrt2)(1+2p)-3-2\sqrt{2+p^2}$ vanishes at $p=0$ and has positive derivative for at positive $p$.  Since both
sides are positive, it follows that
\[
    \Xi(\phi)<(1+\sqrt2)(s+c).
\]
Since $x\mapsto x/(x+s+c)$ is increasing for $x>0$, the desired ratio is
strictly smaller than
\[
    \frac{1+\sqrt2}{2+\sqrt2}=\frac1{\sqrt2}. \qedhere
\]
\end{proof}

In the next lemma, we consider the case in which the vertices form four ideal clusters, with a unique exceptional vertex lying outside all four. We show that, if the first-order condition holds, one can assign a value to the exceptional vertex so that its contribution to the second-order condition is very negative. For simplicity, we adopt a different normalization in this lemma.

\begin{lemma}\label{lem:exceptional-ideal-case}
For all $q_0 > 1/\sqrt2$ and $h > 0$, there is a constant $\kappa>0$, such that the following  holds.  Suppose $\phi\in\T$ is at least $h/2$ from every multiple of $\pi/2$, and $W_0,W_1,W_2,W_3\in[0,1/4]$ satisfy
\[
    \sum_{j=0}^3W_j\ge q_0,
    \qquad
    \sum_{j=0}^3W_j\sin\paren{\frac{\pi j}{2}-\phi}=0.
\]
Then there is a $z\in\CC$ with $\abs z=1$ such that
\[
    \sum_{j=0}^3W_j\cos\paren{\frac{\pi j}{2}-\phi}
       \abs{z-i^j}^2\le-\kappa.
\]
\end{lemma}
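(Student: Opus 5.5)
We reduce to an explicit extremal problem and then use compactness. The left-hand side has a closed-form minimum over unit $z$. Write $c_j:=\cos(\pi j/2-\phi)$, $A:=\sum_j W_jc_j$ and $P:=\sum_j W_jc_j\, i^j$. Since $\abs{z-i^j}^2=2-2\Re(z\,\overline{i^j})$ when $\abs z=1$, we get
\[
    \sum_{j}W_jc_j\abs{z-i^j}^2=2A-2\Re\bigl(z\overline P\bigr),
\]
and the best unit $z$ (namely $z=P/\abs P$) gives the value $2(A-\abs P)$. So it suffices to prove $A<\abs P$ uniformly over the admissible data.

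The admissible set is compact: $W\in[0,1/4]^4$ with $\sum_j W_j\ge q_0$ and the sine constraint, and $\phi$ in the closed set of points at distance $\ge h/2$ from $\tfrac{\pi}{2}\mathbb Z$. The function $A-\abs P$ is continuous there. Hence a \emph{pointwise} strict inequality $A<\abs P$ yields a uniform $\kappa=-2\max(A-\abs P)>0$. The plan is therefore to prove $A<\abs P$ pointwise.

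\textbf{Normalization.} Shifting $j\mapsto j+1$ while replacing $\phi$ by $\phi-\pi/2$ and $z$ by $iz$ preserves everything. So we may assume $0<\phi<\pi/2$. Write $c=\cos\phi>0$, $s=\sin\phi>0$, $p=W_0+W_2$, $q=W_1+W_3$, $a=W_0-W_2$, $b=W_1-W_3$. Then:
\begin{itemize}
    \item $\sum_j W_j e^{i(\pi j/2-\phi)}=e^{-i\phi}(a+ib)$.
    \item The sine constraint says this number is real, so $(a,b)=r(c,s)$ with $r\in\RR$, and $A=r$.
    \item $P=cp+isq$, so $\abs P^2=c^2p^2+s^2q^2$.
    \item Since $W_j\le 1/4$, we have $\abs a\le \tfrac12-p$ and $\abs b\le\tfrac12-q$.
\end{itemize}
If $r\le 0$, we are done immediately: $p+q\ge q_0>0$ and $c,s>0$ force $\abs P>0\ge A$. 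So the key case is $r>0$. There the constraints read
\[
    rc\le \tfrac12-p,\qquad rs\le\tfrac12-q,\qquad p,q\ge0.
\]

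\textbf{Key case.} Suppose for contradiction that $r\ge \abs P=\sqrt{c^2p^2+s^2q^2}$. I want to show this forces
\[
    p+q\le \frac{\Xi(\phi)}{\Xi(\phi)+\cos\phi+\sin\phi}.
\]
By \cref{lem:trig-ineq} this is $<1/\sqrt2<q_0$, a contradiction.

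Adding the two constraints gives $p+q\le 1-r(c+s)$. Combined with the lower bound $r\ge\abs P$, the problem becomes: maximize $p+q$ subject to
\[
    \sqrt{c^2p^2+s^2q^2}\,(c+s)\le 1-(p+q),\qquad p,q\le\tfrac12 .
\]
The crude Cauchy--Schwarz bound $\abs P\ge cs(p+q)$ is not enough: it fails as $\phi\to0$. The individual caps $p\le\tfrac12-rc$ and $q\le\tfrac12-rs$ must be used.

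\textbf{Main obstacle.} The main obstacle is this two-variable optimization, i.e. identifying its extremal configuration and recognizing that the resulting value is exactly $\Xi/(\Xi+c+s)$ with $\Xi=\sqrt{1+s^2}+\sqrt{1+c^2}$. I would attack it with Lagrange multipliers or a direct boundary analysis.
\begin{itemize}
    \item Set $p=\tfrac12-rc-u$ and $q=\tfrac12-rs-v$ with slacks $u,v\ge0$.
    \item Increasing a slack lowers $p+q$ but may lower $\abs P$ faster, so the optimum should have $r=\abs P$ exactly and $u=v=0$ when feasible.
    \item This gives a scalar equation $r^2=c^2(\tfrac12-rc)^2+s^2(\tfrac12-rs)^2$ for $r$, and then $p+q=1-r(c+s)$.
    \item Solving should produce the $\sqrt{1+\sin^2\phi}$ and $\sqrt{1+\cos^2\phi}$ terms appearing in $\Xi$.
\end{itemize}
If the unconstrained extremum violates $p,q\ge0$, I would treat the boundary cases separately; each is elementary.

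Once $p+q<q_0$ is obtained in every case, the contradiction yields $A<\abs P$ pointwise, and compactness gives $\kappa$.
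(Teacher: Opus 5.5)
There is a genuine gap in the key case. The earlier steps are sound: the closed-form minimization over $z$ (your $P=cp+isq$ is exactly the paper's $H$), the normalization to $0<\phi<\pi/2$, the case $r\le 0$, and the compactness step all work.

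In the key case you keep only $0\le p\le\tfrac12-rc$ and $0\le q\le\tfrac12-rs$. You discard the lower bounds $p\ge\abs a=rc$ and $q\ge\abs b=rs$, which encode $W_2\ge0$ and $W_3\ge0$. Without them, the inequality you are aiming for is false, and your proposed extremal point shows it. Take the corner $u=v=0$ with $r=\abs P$; it satisfies every constraint you list. As $\phi\to0^+$, the scalar equation $r^2=c^2(\tfrac12-rc)^2+s^2(\tfrac12-rs)^2$ forces $r\to\tfrac14$. Hence $p\to\tfrac14$, $q\to\tfrac12$, and $p+q\to\tfrac34>1/\sqrt2$.

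Concretely, at $\phi=0.1$ the corner gives $r\approx0.2523$ and $p+q\approx0.724$, whereas $\Xi/(\Xi+c+s)\approx0.688$. At that point $p<rc$, i.e.\ $W_2=\tfrac14-rc<0$, so it is not an admissible configuration. So solving the corner equation does not produce $\Xi$. No boundary analysis of your relaxed system can yield a contradiction when $q_0$ is close to $1/\sqrt2$ and $h$ is small.

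The missing idea is to use those lower bounds directly, which also removes any need for optimization. Since $p\ge rc$, the assumption $r^2\ge c^2p^2+s^2q^2$ gives $s^2q^2\le r^2(1-c^4)=r^2s^2(1+c^2)$, so $q\le r\sqrt{1+c^2}$. Symmetrically, $p\le r\sqrt{1+s^2}$. Hence $p+q\le\min\{r\Xi(\phi),\,1-r(c+s)\}\le\Xi(\phi)/(\Xi(\phi)+c+s)$, and \cref{lem:trig-ineq} finishes the contradiction. This is the paper's argument. Note that $\Xi/(\Xi+c+s)$ is only the output of this chain of bounds, not the value at an extremal configuration, so searching for an extremizer that reproduces it exactly is a misdirection.
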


\begin{proof}
Without loss of generality, we may assume $0<\phi<\pi/2$. The balance condition says
\begin{equation}\label{eq:complex-first-order-condition}
    \sum_{j=0}^3W_ji^j
    =(W_0-W_2)+i(W_1-W_3)=re^{i\phi}
\end{equation}
for some $r\in\RR$.  We set $z = H/|H|$, where 
\[
    H=(W_0+W_2)\cos\phi+i(W_1+W_3)\sin\phi.
\]

Since $W_0+W_2\ge q_0-1/2>0$, we have $H\ne0$. We first claim that $\abs H>r$.  This is immediate if $r\le0$.  If $r>0$ and
$\abs H\le r$, then \eqref{eq:complex-first-order-condition} gives
\[
  W_0-W_2=r\cos\phi,\qquad W_1-W_3=r\sin\phi.
\]
Note that this means $W_0+W_2\ge r\cos\phi$, and  $W_1+W_3\ge r\sin\phi$. Using $0\le W_j\le1/4$, we also get
\[
  W_0+W_2\le\frac12-r\cos\phi,
  \qquad
  W_1+W_3\le\frac12-r\sin\phi,
\]
while $|H| \leq r$ means
\[
\abs H^2
=(W_0+W_2)^2\cos^2\phi+(W_1+W_3)^2\sin^2\phi\le r^2.
\]
These inequalities imply that 
\begin{equation*}
    r^2 \geq r^2 \cos^4 \phi + |W_1 + W_3|^2 \sin^2 \phi,
\end{equation*}
which means $W_1+W_3\le r\sqrt{1+\cos^2\phi}$ and similarly $W_0+W_2\le r\sqrt{1+\sin^2\phi}$. Consequently, 
\[
\sum_jW_j
\le\min\bigl\{r\Xi(\phi),
       1-r(\cos\phi+\sin\phi)\bigr\}
\le\frac{\Xi(\phi)}
        {\Xi(\phi)+\cos\phi+\sin\phi}
<\frac1{\sqrt2}<q_0
\]
by \cref{lem:trig-ineq}, contradicting $\sum_jW_j\ge q_0$. Now a direct expansion shows
\begin{equation}\label{eq:ideal-test-expansion}
\sum_jW_j\cos\paren{\frac{\pi j}{2}-\phi}\abs{z-i^j}^2 =2r-2\Re(\overline zH) =2(r-\abs H)<0.
\end{equation}

Finally, the admissible weights and phases form a compact set, which therefore gives allows a uniform upper bound $-\kappa<0$.
\end{proof}

Finally, we can assign a value $z$ to the test function at each exceptional vertex while ensuring that $\mathcal{Q}_{V \times E}$ stays sufficiently negative. 

\begin{lemma}\label{lem:exceptional-vertex-main}
There is a constant $\tau_0>0$, depending only on $h$ and $q_0$, such that the
following holds.  Suppose $(G,\theta)$ is
$(h,\rho,\alpha,\beta)$-clustered with $\mu(G)\ge3/4-\eta$, and
\[
    0\le\eta,\rho,\alpha,\beta\le\tau_0.
\]
For every exceptional vertex $u\in E$, there is a $z_u\in\CC$ with
$\abs{z_u}=1$ such that
\begin{equation}\label{eq:exceptional-vertex-negative}
\sum_{j=0}^3\sum_{v\in B_j}
w_{uv}\cos(\theta_v-\theta_u)
\abs{z_u-i^j(1+it_v)}^2
\le-\frac{\kappa n}{2},
\end{equation}
where $\kappa$ is from \cref{lem:exceptional-ideal-case}.
\end{lemma}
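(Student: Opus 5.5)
The plan is to reduce \eqref{eq:exceptional-vertex-negative} to \cref{lem:exceptional-ideal-case} by a perturbative comparison. Fix $u\in E$ and write $\phi=\theta_u$. Because $u\notin B$, $\phi$ lies at distance more than $h$, hence at least $h/2$, from every multiple of $\pi/2$. Define normalized weights
\[
W_j:=\frac1n\sum_{v\in B_j}w_{uv}\in\Bigl[0,\tfrac14+\rho\Bigr].
\]
We want to replace each bulk phase $\theta_v$ ($v\in B_j$) by its cluster center $\pi j/2$, and each test value $i^j(1+it_v)$ by $i^j$. The cost of these replacements is controlled by $\sum_{v\in B}\abs{x_v}\le\sum_{v\in B}(\abs{t_v}+\abs{\bar t})\le2\alpha n$.

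Indeed, for $v\in B_j$ and $\abs{z}=1$ we have
\[
\abs{\cos(\theta_v-\phi)-\cos(\pi j/2-\phi)}\le\abs{x_v},
\qquad
\bigl|\abs{z-i^j(1+it_v)}^2-\abs{z-i^j}^2\bigr|\le C\abs{t_v}.
\]
Both bounds use $\abs{t_v}\le2h$. Summing these gives
\[
\text{LHS of \eqref{eq:exceptional-vertex-negative}}
=n\sum_{j}W_j\cos\paren{\frac{\pi j}{2}-\phi}\abs{z-i^j}^2+O(\alpha n).
\]

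Next I verify the hypotheses of \cref{lem:exceptional-ideal-case}, which only hold approximately, so that step needs a slight robustification. For the total mass, $\sum_jW_j\ge\mu(G)-1/n-\abs E/n\ge 3/4-\eta-\beta-o(1)$, and this exceeds some $q_0'>q_0$ if $q_0<3/4$ and $\tau_0$ is small; here I take $q_0\in(1/\sqrt2,3/4)$. For the balance condition, the first-order equation \eqref{eq:first-order-condition-statement} at $u$ reads $\sum_v w_{uv}\sin(\theta_v-\phi)=0$. The exceptional vertices contribute at most $\beta n$, and replacing $\theta_v$ by $\pi j/2$ costs $O(\alpha n)$. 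Hence $\abs{\sum_jW_j\sin(\pi j/2-\phi)}\le C(\alpha+\beta)$.

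Finally, $W_j\le 1/4+\rho$ instead of $\le1/4$. I handle this by rescaling $W_j\mapsto W_j/(1+4\rho)$, which changes the mass bound only by $O(\rho)$.

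The main obstacle is that \cref{lem:exceptional-ideal-case} is stated with exact balance. I would deal with this by compactness, proving a robust version: the map
\[
(\phi,W)\mapsto\min_{\abs z=1}\sum_jW_j\cos\paren{\frac{\pi j}{2}-\phi}\abs{z-i^j}^2
\]
is continuous on the compact set $K$ of $(\phi,W)$ with $\phi$ at distance at least $h/2$ from $\tfrac{\pi}{2}\mathbb Z$, $W_j\in[0,1/4]$, and $\sum W_j\ge q_0$. On the exact-balance subset $K_0$ this map is at most $-\kappa$. By uniform continuity, there is $\delta>0$ such that any $(\phi,W)\in K$ whose balance defect is at most $\delta$ lies within distance $\delta'$ of $K_0$, and there the minimum is at most $-3\kappa/4$.

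To see the distance claim, suppose it failed. Then a sequence with defects tending to $0$ would converge in $K$ to a balanced point, which lies in $K_0$, a contradiction. One needs to take $\kappa$ as the constant from \cref{lem:exceptional-ideal-case} applied on $K_0$, possibly with the slightly smaller $q_0$ absorbed by choosing $q_0'$ strictly between.

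Choosing $\tau_0$ so small that $C(\alpha+\beta+\rho)\le\delta$ and $O(\alpha)\le\kappa/4$, we pick the minimizing $z_u$ with $\abs{z_u}=1$. This gives the bound $-3\kappa n/4+\kappa n/4=-\kappa n/2$, as required.
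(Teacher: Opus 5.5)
Your proof is correct. Its skeleton matches the paper's: normalized weights $W_j$, approximate balance from the first-order condition at $u$, rescaling by $(1+4\rho)^{-1}$ so that $W_j\le 1/4$, an application of \cref{lem:exceptional-ideal-case}, and a Lipschitz transfer back to the actual sum.

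\textbf{Where the routes differ.} The genuine difference is how you pass from approximate to exact balance.

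\emph{The paper's route.} It does this by hand. It keeps $\phi$ fixed and adjusts one antipodal pair: $(W_0,W_2)$ if $\abs{\sin\phi}\ge\abs{\cos\phi}$, and $(W_1,W_3)$ otherwise. This produces exactly balanced weights $\tilde W\in[0,1/4]^4$ at $\ell^1$ cost at most $\sqrt2$ times the defect. It then applies \cref{lem:exceptional-ideal-case} to $\tilde W$ itself.

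\emph{Your route.} You prove a robust version of \cref{lem:exceptional-ideal-case} by compactness. The function $f(\phi,W)=\min_{\abs z=1}\sum_j W_j\cos(\pi j/2-\phi)\abs{z-i^j}^2$ is uniformly continuous on $K$, and a small balance defect forces proximity to $K_0$. That proximity claim is exactly the soft, non-quantitative form of the paper's explicit projection step.

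\emph{What each buys.} The paper's route gives an explicit dependence of $\tau_0$ on $\kappa$ (through the constant $K$), at the price of a small case analysis and checking that the adjusted pair stays in $[0,1/4]^2$. Yours is shorter and avoids that case analysis, but the $\delta$ it produces is obtained by contradiction and is therefore non-effective. The paper's projection is precisely how one would make it effective.

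Your choice $\phi=\theta_u$, with replacement errors measured by $\abs{x_v}$ rather than $\abs{t_v}$, is also slightly cleaner than the paper's $\phi=\theta_u-\bar t$. You do not need $\abs{\bar t}\le h/2$ to keep $\phi$ at least $h/2$ from the quarter turns, since $u\notin B$ already puts $\theta_u$ more than $h$ away.

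\textbf{Two small points to tidy.}

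First, your final estimate compares a sum with weights $W_j$ to a bound you proved for $W'_j=W_j/(1+4\rho)$. This step is free but should be stated. Writing $g_j=\cos(\pi j/2-\phi)\abs{z_u-i^j}^2$, you have $\sum_j W_j g_j=(1+4\rho)\sum_j W'_j g_j\le -3\kappa/4$, because the right-hand sum is negative and $1+4\rho\ge1$.

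Second, the ``$-1/n-o(1)$'' in your mass bound has no place in a non-asymptotic lemma, and it is unnecessary. Counting neighbors of $u$ in $E\setminus\{u\}$ gives $\sum_jW_j\ge\mu(G)-\abs E/n\ge 3/4-\eta-\beta$ directly. Alternatively, $u\in E$ forces $1\le\abs E\le\beta n$, so $1/n\le\beta\le\tau_0$.
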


\begin{proof}
Fix $u\in E$, put $\phi=\theta_u-\bar t$, and define
\[
    W_j:=\frac1n\sum_{v\in B_j}w_{uv}.
\]
Then $0\le W_j\le1/4+\rho$ and $W_0 + W_1 + W_2 + W_3 \geq 3/4 - \eta - \beta$. Since $\theta_v-\theta_u=\pi j/2-\phi+t_v$ for $v\in B_j$, the
first-order condition and the $L^1$ bound on the $t_v$ give
\begin{equation}\label{eq:perturbed-first-order}
\Bigl|{\sum_{j=0}^3W_j\sin\Bigl({\frac{\pi j}{2}-\phi}\Bigr)}\Bigr|
\le\alpha+\beta.
\end{equation}
We next perturb these weights to impose exact first-order balance.  For
$X\in\RR^4$, write
\[
    D_\phi(X):=\sum_{j=0}^3X_j
        \sin\paren{\frac{\pi j}{2}-\phi}.
\]
Set $\lambda=(1+4\rho)^{-1}$ and $W'_j=\lambda W_j$.  Then
$W'_j\in[0,1/4]$,
\[
    \sum_j\abs{W_j-W'_j}\le4\rho,
    \qquad
    D_\phi(W')=\lambda D_\phi(W).
\]
Put $s=\sin\phi$, $c=\cos\phi$, $d=W'_2-W'_0$, and
$e=W'_1-W'_3$, so that $D_\phi(W')=sd+ce$.  If
$\abs s\ge\abs c$, replace $(W'_0,W'_2)$ by a pair
$(\tilde W_0,\tilde W_2)\in[0,1/4]^2$ whose difference is
$-ce/s$, leaving the other two coordinates unchanged.  Such a pair exists
because $\abs{ce/s}\le1/4$, and it may be chosen at $\ell^1$ cost
\[
    \abs{(-ce/s)-d}
    =\frac{\abs{D_\phi(W')}}{\abs s}
    \le\sqrt2|{D_\phi(W')}|.
\]
If $\abs c>\abs s$, the symmetric construction instead adjusts
$(W'_1,W'_3)$.  In either case, we have
\[
    \tilde W_j\in[0,1/4],
    \qquad
    D_\phi(\tilde W)=0,
    \qquad
    \sum_j|{W_j-\tilde W_j}|
    \le K(\rho+\alpha+\beta)
\]
for an absolute constant $K$, where we used
\eqref{eq:perturbed-first-order} in the last inequality. For sufficiently small $\tau_0$, the perturbation estimate above imply
\[
    \sum_j\tilde W_j
    \ge\frac34-\eta-\beta-K(\rho+\alpha+\beta)\ge q_0.
\]
Moreover, because $u\in E$ and
$\abs{\bar t}\le h/2$ for small enough $\alpha$, the phase $\phi$ is at least $h/2$ from every
multiple of $\pi/2$.  We may therefore apply
\cref{lem:exceptional-ideal-case} and obtain $z_u$, with
$\abs{z_u}=1$, such that
\begin{equation}\label{eq:tilde-w-upper-bound}
\sum_j\tilde W_j\cos\paren{\frac{\pi j}{2}-\phi}
\abs{z_u-i^j}^2\le-\kappa.
\end{equation}

By the choice of $h_0$, we have $|{t_v}|\le h+\alpha<1$.  Because
$|{z_u-i^j}|\le2$, the difference between
$|{z_u-i^j(1+it_v)}|^2$ and $|{z_u-i^j}|^2$ is bounded by an absolute
constant times $\abs{t_v}$.
The cosine function is Lipschitz, and $|{(\theta_v-\theta_u)-(\pi j/2-\phi)}|=|{t_v}|.$ Consequently, using \eqref{eq:tilde-w-upper-bound},
\begin{equation*}
\frac1n\sum_j\sum_{v\in B_j}
w_{uv}\cos(\theta_v-\theta_u)
\abs{z_u-i^j(1+it_v)}^2\le-\kappa+C_2(\rho+\alpha+\beta)
\end{equation*}
for an absolute constant $C_2$.
After decreasing $\tau_0$, the right-hand side is at most
$-\kappa/2$.
\end{proof}

Putting all the pieces together, we obtain that $\mathcal{Q}_{V \times V}(F) \leq 0$ for a test function $F$; then $\mathcal{Q}_{V \times V}(F) = 0$ from the second-order condition, and this equality strongly constrains the graph and equilibrium structure.

\begin{proposition}\label{prop:eliminate-exceptional-vtx-and-perturbation}
There is a constant $\tau>0$ such that the following holds.  Suppose
$(G,\theta)$ is $(h,\rho,\alpha,\beta)$-clustered with
$\mu(G)\ge3/4-\eta$, and
\[
    0\le\eta,\rho,\alpha,\beta\le\tau.
\]
Then $E$ is empty and $t_v=0$ for every $v\in B$.
\end{proposition}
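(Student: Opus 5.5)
We extend the bulk test function \eqref{eq:test-function-bulk} to all of $V$ by setting $F(u):=z_u$ for each $u\in E$, where $z_u$ is the unit complex number supplied by \cref{lem:exceptional-vertex-main}. We then split the quadratic form as
\[
\cQ_{V\times V}(F)=\cQ_{B\times B}(F)+2\,\cQ_{E\times B}(F)+\cQ_{E\times E}(F),
\]
using the symmetry of $\cQ$ in the two arguments. The plan is to show that the right-hand side is at most $-c' n^2\sigma^2-c''n\abs E$ for positive constants $c',c''$. The second-order condition \eqref{eq:second-order-condition-statement} gives $\cQ_{V\times V}(F)\ge0$. Together these force $\sigma=0$ and $E=\varnothing$. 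Since $n\sigma^2=\sum_{v\in B}t_v^2$, the vanishing of $\sigma$ gives $t_v=0$ for all $v\in B$.

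The three pieces are estimated as follows.

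\emph{Bulk block.} \cref{cor:bulk-bound-main} gives
\[
\cQ_{B\times B}(F)\le(-c+C\tau)n^2\sigma^2+Cn^2\alpha\beta.
\]

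\emph{Mixed block.} Summing \eqref{eq:exceptional-vertex-negative} over $u\in E$ gives $\cQ_{E\times B}(F)\le-\tfrac{\kappa}{2}n\abs E$, provided $\tau\le\tau_0$.

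\emph{Exceptional block.} Since $\abs{z_u-z_v}^2\le4$, we have the crude bound $\cQ_{E\times E}(F)\le4\abs E^2\le4\beta n\abs E$. This is absorbed into the mixed term once $4\beta<\kappa/2$.

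The remaining difficulty is the error $Cn^2\alpha\beta$ in \cref{cor:bulk-bound-main}. It is not expressed in terms of $\abs E$ or $\sigma$, so as stated it cannot be absorbed, and I expect this to be the main obstacle. The fix is to return to its source, the remainder $\mathcal R$ in \cref{lem:first-order-condition}. That remainder actually satisfies
\[
\abs{\mathcal R}\le\abs E\sum_{u\in B}\abs{t_u}\le\abs E\sqrt{n}\sqrt{n\sigma^2}=n\abs E\,\sigma,
\]
by Cauchy--Schwarz. Hence the error term in \cref{cor:bulk-bound-main} may be replaced by $Cn\abs E\sigma$. By AM--GM,
\[
Cn\abs E\sigma\le \delta n^2\sigma^2+C^2\delta^{-1}\abs E^2\le\delta n^2\sigma^2+C^2\delta^{-1}\beta n\abs E.
\]
Choose $\delta=c/4$ first, and then choose $\tau$ small enough in terms of $c$ and $\kappa$ that $C\tau\le c/4$ and $C^2\delta^{-1}\beta+4\beta\le\kappa/4$. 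This yields
\[
0\le\cQ_{V\times V}(F)\le-\tfrac c2 n^2\sigma^2-\tfrac{\kappa}{4}n\abs E,
\]
and therefore $\sigma=0$ and $E=\varnothing$.

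The point to double-check is that this refined bound on $\mathcal R$ propagates through \cref{lem:q-b-b-bound-step-1}, where the remainder enters with a factor of $4$. Every other error term there is already of the form $Chn^2\sigma^2$, so only constants change. If the refinement were unavailable, a fallback would be a case split: if $\abs E\ge\sigma n$, then the crude bound $n^2\alpha\beta$ is dominated once $\alpha$ is small relative to $\kappa$; otherwise we use the Cauchy--Schwarz bound.
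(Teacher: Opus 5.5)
Your proposal is correct and follows the paper's proof: the same extension $F(u)=z_u$ on $E$, the same bulk/mixed/exceptional decomposition, and the same contradiction with $\cQ_{V\times V}(F)\ge 0$. The only difference is the $Cn^2\alpha\beta$ error you flagged as the main obstacle. The paper handles it without reopening \cref{lem:first-order-condition}: clustering only requires $\abs E\le\beta n$, so one may take $\beta=\abs E/n$, which turns the error into $C\alpha n\abs E$, absorbed by $-\kappa n\abs E$ once $C\alpha<\kappa$. Your route, refining $\abs{\mathcal R}\le n\abs E\sigma$ by Cauchy--Schwarz and then applying AM--GM, is also valid, just more work.
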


\begin{proof}
Without loss of generality, suppose $\beta$ is chosen as small as possible for $G$, meaning that $|E| = \beta n$. For $u\in B$, set $F(u)=F\vert_B(u)$ as before.  For each $u\in E$, set
$F(u)=z_u$, where $z_u$ is supplied by
\cref{lem:exceptional-vertex-main}.  Since $\abs{z_u}=1$,
\[
    \cQ_{E\times E}(F)\le4\abs E^2=4\beta^2n^2.
\]
Summing \eqref{eq:exceptional-vertex-negative} over $u\in E$ and using
\cref{cor:bulk-bound-main}, we find
\begin{align}\label{eq:full-test-upper-bound}
\cQ_{V\times V}(F)
&=\cQ_{B\times B}(F)+2\cQ_{B\times E}(F)
  +\cQ_{E\times E}(F)\notag\\
&\le\paren{-c+C(\rho+\eta)}n^2\sigma^2
  +n^2\beta(C\alpha-\kappa+4\beta).
\end{align}
Choose $\tau\le\tau_0$ sufficiently small, in terms of
$c,C$, and $\kappa$, that both coefficients in the last line are
negative whenever $0\le\eta,\rho,\alpha,\beta\le\tau$.
The second-order condition gives $\cQ_{V\times V}(F)\ge0$, whereas
\eqref{eq:full-test-upper-bound} would be strictly negative if either
$\sigma>0$ or $\beta>0$.  Hence $\sigma=\beta=0$, as required.
\end{proof}

\section{Proof of the main theorem}
\label{sec:exact-four-clusters}

The preceding argument in \cref{prop:eliminate-exceptional-vtx-and-perturbation} leaves only the exact quarter turn configuration to be dealt with. The following argument shows that such configuration can not be local minimum.

\begin{lemma}\label{lem:exact-four-cluster-final-step}
Suppose $\theta$ is a local minimum and
$V=B_0\sqcup B_1\sqcup B_2\sqcup B_3$, with
$\theta_v=\pi j/2$ for every $v\in B_j$.  Then every edge of $G$ has both
endpoints in the same set $B_j$.  Consequently,
\[
    \min_{v \in V} \deg (v)\le\max_{0\le j\le3}\paren{\abs{B_j}-1}.
\]
\end{lemma}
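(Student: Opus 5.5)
The plan is to treat opposite-cluster edges and adjacent-cluster edges separately. Opposite-cluster edges will be excluded by the second-order condition. Adjacent-cluster edges will be excluded by a degenerate direction in which the energy is exactly constant, combined with the first-order condition at the perturbed configuration. Throughout, $\theta_v=\pi j/2$ exactly on $B_j$. Indices $j$ are taken mod $4$.

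\emph{Step 1 (no opposite edges).} I will apply \eqref{eq:second-order-condition-statement} with $F(v)=i^j$ for $v\in B_j$, which is the bulk test function \eqref{eq:test-function-bulk} with all $t_v=0$. For each edge type:
\begin{itemize}
\item same-cluster edges have $F(u)=F(v)$ and contribute $0$;
\item adjacent-cluster edges have $\cos(\theta_u-\theta_v)=\cos(\pm\pi/2)=0$ and contribute $0$;
\item opposite-cluster edges have $\cos=-1$ and $\abs{F(u)-F(v)}^2=4$.
\end{itemize}
Hence $\cQ_{V\times V}(F)=-4\sum_{\mathrm{opp}}w_{uv}$. The second-order condition forces this to be $\ge0$, so $G$ has no edges between $B_j$ and $B_{j+2}$.

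\emph{Step 2 (an exactly flat direction).} For a fixed $j$ and small $\delta$, let $\theta^\delta$ rotate the whole cluster $B_j$ rigidly by $\delta$ and leave the other clusters fixed. The energy changes as follows:
\begin{itemize}
\item same-cluster edges and edges among the other clusters are unchanged;
\item there are no opposite edges, by Step 1;
\item an edge from $u\in B_j$ to $B_{j+1}$ changes from $1-\cos(\pi/2)=1$ to $1-\sin\delta$;
\item an edge from $u\in B_j$ to $B_{j-1}$ changes to $1+\sin\delta$.
\end{itemize}
So $\mathcal E_G(\theta^\delta)-\mathcal E_G(\theta)=\sin\delta\sum_{u\in B_j}(b_u-a_u)$, where $a_u,b_u$ count the neighbours of $u$ in $B_{j+1}$ and $B_{j-1}$.

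The first-order condition \eqref{eq:first-order-condition-statement} at $u\in B_j$ reads $a_u=b_u$. Here the terms from the same cluster vanish, there are no opposite terms, and the adjacent clusters contribute $\mp1$ each. Hence the energy is \emph{exactly} constant along $\delta\mapsto\theta^\delta$. Since $\theta$ is a local minimum, for all sufficiently small $\delta$ the configuration $\theta^\delta$ attains the minimum value of $\mathcal E_G$ on a neighbourhood of $\theta$. It is therefore itself a local minimum, and in particular satisfies \eqref{eq:first-order-condition-statement}.

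\emph{Step 3 (contradiction from the first-order condition at $\theta^\delta$).} Take $v\in B_{j+1}$ and check its equilibrium equation at $\theta^\delta$.
\begin{itemize}
\item At $\theta$ its equation says that its number of $B_j$-neighbours equals its number of $B_{j+2}$-neighbours, each contributing $\pm1$.
\item At $\theta^\delta$, each $B_j$-neighbour contributes $\sin(\pi/2-\delta)=\cos\delta$ instead of $1$, and all other terms are unchanged.
\item The equation therefore changes by $(\cos\delta-1)\cdot\#\{B_j\text{-neighbours of }v\}$.
\end{itemize}
For small $\delta\neq0$ this change must vanish, so $v$ has no neighbour in $B_j$. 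Doing this for every $j$ and every $v$ rules out all adjacent-cluster edges. Together with Step 1, every edge lies inside some $B_j$, and the degree bound follows because each vertex of $B_j$ has at most $\abs{B_j}-1$ neighbours.

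The main obstacle is Step 2. The Hessian is degenerate on adjacent edges, so the second-order test says nothing about them, and a naive perturbation of a single vertex fails because it raises the same-cluster energy by a term of order $1-\cos\delta$. Rotating a whole cluster rigidly avoids that term. The point that needs care is the remark that a nearby configuration with the same energy as a local minimum is again a local minimum. This is what lets us apply the first-order condition at $\theta^\delta$ instead of computing cubic terms.
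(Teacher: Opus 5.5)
Your proof is correct. Step 1 is identical to the paper's, but you handle adjacent-cluster edges by a different route.

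The paper first sums the equilibrium equations over each cluster to get a common adjacent edge count $m$. It then counter-rotates two neighbouring clusters ($B_0$ by $t$, $B_1$ by $-t$) and computes the exact energy change $2m\sin t(1-\cos t)$. When $m>0$ this is negative for small $t<0$, which gives an explicit descent curve at cubic order.

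You instead rotate a single cluster. The energy is exactly constant along this curve, because of the same balance $m_{j-1}=m_j$. Since $\theta$ is a local minimum, each nearby $\theta^\delta$ with equal energy is again a local minimum, hence an equilibrium. The equilibrium equation at a vertex of $B_{j+1}$ then forces $(\cos\delta-1)$ times its number of $B_j$-neighbours to vanish. This step is sound: $\theta^\delta$ lies in the open neighbourhood on which $\theta$ minimizes, and it attains that minimum value.

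The trade-off is as follows. Your argument avoids any cubic expansion and gives a vertex-by-vertex conclusion. The paper's argument is a single explicit computation that produces lower-energy configurations arbitrarily close to $\theta$, rather than reaching the contradiction indirectly through the first-order condition.

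Both arguments rightly go beyond the second-order condition. With no opposite edges, the Hessian at the exact four-cluster configuration is positive semidefinite, so no test function can exclude adjacent edges.
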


\begin{proof}
Set $F(v)=i^j$ for $v\in B_j$.  Same cluster edges make no contribution
to $\cQ_{V\times V}(F)$, adjacent-cluster edges have cosine zero, and
opposite-cluster edges have cosine $-1$.  Hence
\[
    0\le\cQ_{V\times V}(F)
    =-4\sum_{\mathrm{opp}}w_{uv},
\]
so there are no edges between opposite clusters. For $u\in B_j$, the equilibrium equation now reads
\[
    \deg_{B_{j-1}}(u)=\deg_{B_{j+1}}(u).
\]
Let $m_j$ be the number of edges between $B_j$ and $B_{j+1}$.  Summing
over $u\in B_j$ gives $m_{j-1}=m_j$.  Thus all four adjacent edge counts
have a common value $m$.

Suppose $m>0$.  Rotate $B_0$ by $t$ and $B_1$ by $-t$, leaving
$B_2\cup B_3$ fixed.  Since opposite edges are absent, an exact
calculation gives
\[
    \mathcal E_G(\theta^t)-\mathcal E_G(\theta)
    =m\paren{2\sin t-\sin(2t)}
    =2m\sin t\paren{1-\cos t}.
\]
This is negative for all sufficiently small negative $t$, contradicting local
minimality.  Hence $m=0$, so no edge joins distinct clusters.  The degree
bound follows immediately.
\end{proof}

We are now finally ready to prove the main theorem.
\begin{proof}[Proof of \cref{thm:main}]
Retain the fixed choice of $h\in(0,h_0)$, and let $\tau>0$ be supplied by
\cref{prop:eliminate-exceptional-vtx-and-perturbation}.  Choose $ 0<\rho=\alpha=\beta<\min\{\tau,1/4\}.$ Apply \cref{thm:clustering-main} with these parameters, and let
$\eta_{\mathrm{cl}}>0$ be the resulting constant.  Finally, choose
$0<\eta\le\min\{\eta_{\mathrm{cl}},\tau\}$ sufficiently small that
\begin{equation}\label{eq:final-degree-separation}
    \frac34-\eta>\frac14+\rho.
\end{equation}
Suppose, for a contradiction, that $G$ satisfies
\[
    \min_{v\in V}\deg(v)\ge\paren{\frac34-\eta}n
\]
and has a nonsynchronized local minimum $\theta$.  Then
\[
    \mu(G)=\frac{\min_{v\in V}\deg(v)+1}{n}
    \ge\frac34-\eta\ge\frac34-\eta_{\mathrm{cl}},
\]
so \cref{thm:clustering-main} shows that, after a global rotation,
$(G,\theta)$ is $(h,\rho,\alpha,\beta)$-clustered.  Global rotations
preserve local minimality, and $\eta,\rho,\alpha,\beta\le\tau$.  Hence
\cref{prop:eliminate-exceptional-vtx-and-perturbation} gives
$E=\varnothing$ and $t_v=0$ for every $v\in B=V$.  Thus
$x_v=\bar t$ for every $v\in B$.  After rotating all phases by
$-\bar t$, we therefore have $\theta_v={\pi j}/{2}$ for all $v \in B_j.$ Consequently,
\cref{lem:exact-four-cluster-final-step} gives
\[
    \min_{v \in V} \deg(v)\le\max_j\paren{\abs{B_j}-1}
    <\paren{\frac14+\rho}n,
\]
which contradicts the assumed minimum-degree bound and
\eqref{eq:final-degree-separation}.  Hence $G$ has no nonsynchronized
local minimum and is therefore globally synchronizing.
\end{proof}
\begin{remark}[Definition of global synchronization]
    \label{rem:final-remark}
    Throughout this paper, we adopt the definition of global synchronization from \cite[Conjecture~5]{randomstrasse-25}, which requires that $\mathcal{E}_G$ have no spurious local minima. Other authors, such as \cite{bandeira-expanders}, use a dynamical definition: the negative gradient flow of $\mathcal{E}_G$ must synchronize from almost every initial condition. This notion is slightly stronger than the one established here; see \cite[Definition~1.1 and the subsequent discussion]{bandeira-expanders}.

    Our result can be extended to this dynamical setting. It suffices to address the exactly clustered case, since in all other cases we show that every second order critical point of $\mathcal{E}_G$ is synchronized. Consequently, every nonsynchronized critical point is a strict saddle, and only a measure zero set of initial conditions can converge to it under the negative gradient flow \cite{bandeira-expanders}. To keep the paper concise and focused, we leave the extension to the exactly clustered case for future work.
\end{remark}

\bibliographystyle{plain}
\bibliography{references}

\end{document}